\title{The Instructor's Guide to Real Induction}
\author{Pete L. Clark}
\address{Department of Mathematics \\ Boyd Graduate Studies Research Center \\ University 
of Georgia \\ Athens, GA 30602-7403 \\ USA}
\email{pete@math.uga.edu}
\thanks{The author thanks Fran\c cois Dorais, William G. Dubuque, Joel D. Hamkins, Niles Johnson and Iraj Kalantari for helpful conversations and/or 
pointers to the literature.}
\begin{document}

\maketitle

%\tableofcontents

\newtheorem{lemma}{Lemma}
\newtheorem{prop}[lemma]{Proposition}
\newtheorem{cor}[lemma]{Corollary}
\newtheorem{thm}[lemma]{Theorem}
\newtheorem{example}[lemma]{Example}
\newtheorem{thm?}[lemma]{Theorem?}
\newtheorem{schol}[lemma]{Scholium}
\newtheorem{ques}{Question}
\newtheorem{conj}[lemma]{Conjecture}
\newtheorem*{mainthm}{Main Theorem}
\newtheorem{prob}[ques]{Problem}

\newcommand{\Z}{\mathbb{Z}}
\newcommand{\R}{\mathbb{R}}

\renewcommand{\dim}{\operatorname{dim}}
\newcommand{\LC}{\operatorname{LC}}
\newcommand{\AC}{\operatorname{AC}}
\newcommand{\ILC}{\operatorname{ILC}}
\newcommand{\IAC}{\operatorname{IAC}}
\renewcommand{\Z}{\mathbb{Z}}
\newcommand{\Q}{\mathbb{Q}}
\renewcommand{\R}{\mathbb{R}}
\newcommand{\C}{\mathbb{C}}
\newcommand{\F}{\mathbb{F}}
\newcommand{\N}{\mathbb{N}}
\newcommand{\ord}{\operatorname{ord}}
\newcommand{\den}{\operatorname{den}}
\newcommand{\ra}{\rightarrow}
\newcommand{\PP}{\mathcal{P}}
\newcommand{\diam}{\operatorname{diam}}
\newcommand{\Top}{\mathbb{T}}
\newcommand{\Bottom}{\mathbb{B}}

\section{Real Induction}
\noindent
In this article we discuss \textbf{real induction}, a proof technique 
similar to mathematical induction but applicable to proving statements indexed 
by an interval in $\R$.  This is not a new idea -- in fact 
among certain researchers in real analysis it is a commonplace -- but for such a natural and useful idea it is strangely poorly known by the mathematical community at large.  We also give a new inductive principle valid 
in a linearly ordered set which simultaneously generalizes real induction and transfinite induction (and thus also ordinary mathematical induction).  \\ \indent
This article is written primarily for an audience of teachers of undergraduate mathematics, especially honors calculus, real analysis and topology, hence the phrase ``instructor's guide''.  However I have for the most part backed away from explicit \emph{pedagogy}: that is, while I submit to you that there is interesting material here for several undergraduate courses, I make few specific recommendations as to how (or even whether) to teach it.  The ideas and techniques presented here should be of interest (especially?) to those who already have preferred proofs of these theorems.  This material could also 
serve as a source of challenges and enrichment for bright undergraduates -- and their instructors! -- and I have left some things for the reader to work out for herself as well as what I believe to be open problems.  

\subsection{``Induction is fundamentally discrete...''}
\textbf{} \\ \\ \noindent
Real induction is inspired by the principle of mathematical induction.  In broadest terms, the idea is to show that a statement holds for all 
real numbers in an interval by ``pushing from left to right''.  
\\ \\
In many circles it seems to be a truism that such a thing is not possible: how many times have you heard -- or said! -- that induction is fundamentally ``discrete'' and that, alas, inductive methods are not available for ``continuous variables''?  I held this idea myself until rather recently.  

\subsection{...is dead wrong!}
\textbf{} \\ \\ \noindent
Remarkably, this ``induction is fundamentally discrete'' idea has been refuted repeatedly in the literature, going back at least 93 years!  The earliest instance I know of is a 1919 note of Y.R. Chao \cite{Chao}. 
% Here is a (very slightly) paraphrased version:
%
\begin{thm}(Chao)
\label{CHAOTHM}
Let $a \in \R$, and let $S \subset \R$.  Suppose that: \\
(CI1) $a \in S$.  \\
(CI2) There is $\Delta > 0$ such that $\forall x \in \R$, 
$x \in S \implies (x-\Delta,x+\Delta) \cap [a,\infty) \subset S$. \\
Then $[a,\infty) \subset S$.
\end{thm}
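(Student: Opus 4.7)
The plan is to exploit the fact that the step size $\Delta$ in (CI2) is \emph{uniform}, which effectively reduces Chao's principle to ordinary mathematical induction over $\N$. In other words, I would not attempt any ``real induction'' style argument (passing to a supremum, analyzing whether it is attained, etc.); instead I would use the uniform step to march to the right in countably many discrete jumps and then argue that every real number in $[a,\infty)$ is captured along the way.

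Concretely, I would set $x_n := a + n\Delta/2$ for $n \in \N$ and show by ordinary induction on $n$ that $x_n \in S$. The base case $x_0 = a \in S$ is (CI1). For the inductive step, assuming $x_n \in S$, I would apply (CI2) to $x = x_n$: since $x_{n+1} = x_n + \Delta/2$ lies in $(x_n - \Delta, x_n + \Delta)$ and satisfies $x_{n+1} \geq a$, the hypothesis yields $x_{n+1} \in S$.

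To finish, for an arbitrary $y \in [a,\infty)$ I would invoke the Archimedean property of $\R$ to choose $n \in \N$ with $x_n \leq y < x_{n+1}$. Then $|y - x_n| < \Delta/2 < \Delta$ and $y \geq a$, so applying (CI2) to $x_n \in S$ gives $y \in (x_n - \Delta, x_n + \Delta) \cap [a,\infty) \subset S$. There is no serious obstacle here; the only non-discrete input is the Archimedean property, which is precisely what justifies covering $[a,\infty)$ by the countable family of half-step intervals. In this sense Chao's theorem is a very mild strengthening of ordinary induction: unlike the genuine real induction principles to come, the uniform bound $\Delta$ prevents any ``accumulation'' phenomenon from arising and sidesteps the need for a supremum argument.
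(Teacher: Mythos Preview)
Your argument is correct. The paper does not actually supply a proof of Chao's theorem; it explicitly leaves the proof to the reader, noting only that the Archimedean nature of the ordering of $\R$ is critically used. Your proof does exactly this: the uniform step $\Delta$ lets you reduce to ordinary induction on $\N$, and the Archimedean property is invoked precisely where it must be, to guarantee that the discrete sequence $x_n = a + n\Delta/2$ eventually overtakes any given $y \in [a,\infty)$. Your closing remark---that the uniform $\Delta$ is what distinguishes Chao's principle from genuine real induction and obviates any supremum argument---is also on point and matches the paper's commentary that this Archimedean feature is absent from the later formulations.
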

\noindent
Since this is not the formulation I want to discuss, I leave the proof to you, along with Chao's remark that the Archimedean nature of the 
ordering of $\R$ is being critically used here.  This Archimedean feature is 
absent in later formulations.
\\ \indent
But this is just the first -- if it actually is the first -- of many similar formulations of ``continuous induction''.  A literature search turned up 
the following papers, each of which introduces some form of
``continuous induction'', in many cases with no reference to past precedent: \cite{Khinchin23}, \cite{Perron}, \cite{Khinchin49}, \cite{Duren}, \cite{Ford}, \cite{Moss-Roberts}, \cite{Shanahan72}, \cite{Berekova}, \cite{Leinfelder},  \cite{Salat}, \cite{Dowek}, \cite{Kalantari}, \cite{Hathaway}.

\subsection{Real Induction} 
\textbf{} \\ \\ \noindent
Consider ``conventional'' mathematical induction.  To use it, one thinks in terms of \emph{predicates} -- i.e., statements $P(n)$ indexed by the natural numbers -- but the cleanest statement is in terms of \emph{subsets} of $\N$.  The same goes for real induction.
\\ \\
Let $a < b$ be real numbers.  We define a subset $S \subset [a,b]$ to be 
\textbf{inductive} if: \\ \\
(RI1) $a \in S$.  \\
(RI2) If $a \leq x < b$, then $x \in S \implies [x,y] \subset S$ for some 
$y > x$.  \\
(RI3) If $a < x \leq b$ and $[a,x) \subset S$, then $x \in S$.  
\begin{thm}(Real Induction)
\label{REALINDPROP}
\label{REALINDTHM}
For a subset $S \subset [a,b]$, the following are equivalent: \\
(i) $S$ is inductive.  \\
(ii) $S = [a,b]$.  
\end{thm}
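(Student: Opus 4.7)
The direction (ii) $\implies$ (i) is immediate: if $S = [a,b]$ then RI1 is trivial, RI2 follows by choosing any $y \in (x,b]$, and RI3 is vacuous in content since the conclusion always holds. So the substance of the theorem is (i) $\implies$ (ii), and my plan is to extract $[a,b] \subset S$ from the three inductive axioms by a supremum argument. The central idea is that RI1 provides a base of operations, RI2 lets us push forward locally, and RI3 lets us close off at limit points; a sup argument is exactly the tool that packages these three behaviors together via Dedekind completeness of $\R$.

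Concretely, I would introduce the auxiliary set
\[ T = \{x \in [a,b] : [a,x] \subset S\}, \]
which is the ``downward-closed shadow'' of $S$ inside $[a,b]$. Using $S$ itself would be awkward because $S$ need not be downward-closed a priori, whereas $T$ is by construction, making the sup argument clean. By RI1, $a \in T$, so $T$ is nonempty; it is bounded above by $b$; hence $s := \sup T$ exists in $[a,b]$.

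Next I would show $s \in T$. First, $[a,s) \subset S$: given any $x$ with $a \leq x < s$, the definition of $\sup$ yields some $t \in T$ with $x < t$, and then $[a,t] \subset S$ forces $x \in S$. Now if $s = a$, then $s \in S$ by RI1; if $s > a$, then $[a,s) \subset S$ triggers RI3 to give $s \in S$. Combined with $[a,s) \subset S$ this yields $[a,s] \subset S$, so $s \in T$.

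Finally I would rule out $s < b$. If $s < b$, apply RI2 to $x = s$: there exists $y > s$ with $[s,y] \subset S$, and after replacing $y$ with $\min(y,b)$ we still have $y > s$. Then $[a,y] = [a,s] \cup [s,y] \subset S$ puts $y \in T$, contradicting $s = \sup T$. Hence $s = b$ and $[a,b] \subset S$, giving equality. I expect the only subtle point to be the $s = a$ edge case in the application of RI3 (where the hypothesis $[a,s) \subset S$ is vacuously true but $s \in S$ must be obtained from RI1 instead); flagging this separately is really the only non-mechanical step in the argument.
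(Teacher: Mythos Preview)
Your proof is correct and is essentially the same completeness argument as the paper's, organized dually: the paper works with $\inf([a,b]\setminus S)$ and proceeds by contradiction through three cases, whereas you work with $\sup T$ for $T=\{x:[a,x]\subset S\}$ and argue directly. Since $\sup T$ and $\inf([a,b]\setminus S)$ coincide whenever the complement is nonempty, the two arguments are reparametrizations of one another; your version has the mild advantage of making the ``downward-closed'' structure explicit, while the paper's version avoids introducing the auxiliary set $T$.
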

\begin{proof}
(i) $\implies$ (ii): let $S \subset [a,b]$ be inductive.  Seeking a contradiction, suppose $S' = [a,b] \setminus S$ is nonempty, so $\inf S'$ exists and is finite. \\
Case 1: $\inf S' = a$.  Then by (RI1), $a \in S$, so by 
(RI2), there exists $y > a$ such that $[a,y] \subset S$, and thus $y$ 
is a greater lower bound for $S'$ then $a = \inf S'$: contradiction.  \\
Case 2: $a < \inf S' \in S$.  If $\inf S' = b$, then $S = [a,b]$.  Otherwise, by (RI2) there exists $y > \inf S'$ such that 
$[\inf S',y'] \subset S$, contradicting the definition of $\inf S'$.  \\
Case 3: $a < \inf S' \in S'$.  Then $[a,\inf S') \subset S$, so by 
(RI3) $\inf S' \in S$: contradiction! \\
(ii) $\implies$ (i) is immediate.
\end{proof}
\noindent
Theorem \ref{REALINDTHM} is due to D. Hathaway \cite{Hathaway} and, independently, to me.  But mathematically equivalent ideas have been around in the literature for a long time, some of which are \emph{much closer} to our formulation than the one of Chao given above.  
Especially, I acknowledge my indebtedness to \cite{Kalantari}.  \\ \indent
I hope I have been absolutely clear that the enunciation of an inductive principle for subintervals of $\R$ and its application to basic results of analysis has a lot of precedence in the literature. In writing this article, I am wholeheartedly agreeing with their approach and claiming that there is even more to be said.

\section{A Principle of Linearly Ordered Induction}
 \noindent
More generally, let $(X,\leq)$ be a linearly ordered set.  $X$ has at most one \textbf{top element} (resp. at most one \textbf{bottom element}); if such an element exists we denote it by $\Top$ (resp. $\Bottom$).  $X$ is 
\textbf{Dedekind complete} if every nonempty subset which is bounded above has a least upper bound, or, equivalently, if every nonempty subset which is bounded 
below has a greatest lower bound.
\\ \\
The preceding definitions are standard, but this one is new: a subset 
$S$ of a linearly ordered set $(X,\leq)$ is \textbf{inductive} if it satisfies all of the following: \\ \\
(IS1) There exists $a \in X$ such that $(-\infty,a] \subset S$. \\
(IS2) For all $x \in S$, either $x = 1$ or there exists $y > x$ such that $[x,y] \subset S$. \\
(IS3) For all $x \in S$, if $(-\infty,x) \in S$, then $x \in S$.

\begin{thm}
\label{MAINTHM}
\label{MTIOSTHM}
\label{ORDEREDINDTHM}
For a linearly ordered set $X$, TFAE:  \\
(i) $X$ is Dedekind complete.  \\
(ii) The only inductive subset of $X$ is $X$ itself.
\end{thm}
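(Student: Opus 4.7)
The plan is to prove both implications directly, modeling the first on the proof of Theorem \ref{REALINDTHM} and handling the converse by contrapositive.

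For (i) $\Rightarrow$ (ii): assume $X$ is Dedekind complete and let $S$ be inductive; I suppose $S' := X \setminus S$ is nonempty and seek a contradiction. By (IS1) there is $a \in X$ with $(-\infty,a] \subset S$, so every element of $S'$ is strictly greater than $a$, i.e.\ $S'$ is bounded below by $a$. Dedekind completeness yields $m := \inf S' \in X$, and by the defining property of the infimum $(-\infty,m) \subset S$. I then split into cases paralleling the proof of Theorem \ref{REALINDTHM}: if $m \notin S$, then (IS3) applied to $m$ forces $m \in S$, a contradiction; if $m \in S$, then (IS2) says either $m = \Top$ (impossible since $S'$ is nonempty and every element of $S'$ is $\geq m$, but no $x > \Top$ exists), or there is $y > m$ with $[m,y] \subset S$, which produces a lower bound for $S'$ strictly larger than $m$, again a contradiction.

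For (ii) $\Rightarrow$ (i), I prove the contrapositive: if $X$ is not Dedekind complete, I exhibit a proper inductive subset. Choose a nonempty $A \subset X$ that is bounded above but has no supremum, and define
\[ S := \{x \in X : x \text{ is not an upper bound of } A\} = \{x \in X : \exists a \in A,\ a > x\}. \]
Since $A$ has no sup it has no maximum, so $A \subset S$; conversely $S$ misses every upper bound of $A$, and such upper bounds exist since $A$ is bounded above, so $S \subsetneq X$. I then verify the three axioms. For (IS1), pick any $a_0 \in A$ and observe $(-\infty,a_0] \subset S$ because a strictly larger $a_0' \in A$ exists. For (IS2), given $x \in S$ witnessed by $a > x$ in $A$, I take $y := a$; any $z \in [x,a]$ is strictly less than some $a' \in A$ with $a' > a$, so $[x,a] \subset S$ (and automatically $x \neq \Top$, since $a > x$ exists). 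For (IS3), if $(-\infty,x) \subset S$ but $x \notin S$, then $x$ is an upper bound of $A$ while every $z < x$ fails to be one, which would make $x$ the least upper bound of $A$, contradicting our choice of $A$.

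The routine work is checking the three conditions in the converse, but the one genuinely creative step — and the place where I expect to have to think — is identifying the right candidate for a proper inductive set, namely the complement of the set of upper bounds of an $A$ without supremum. Once that set is written down, each axiom corresponds neatly to one consequence of ``$A$ has no sup'': no maximum of $A$ drives (IS1) and (IS2), and the nonexistence of a least upper bound drives (IS3). A minor caveat I will flag in passing is that (IS2) and (IS3) in the statement appear to contain typos (``$x = 1$'' for ``$x = \Top$'', and the outer quantifier of (IS3) ranging over $S$ rather than $X$); the proof above uses the natural reading of these axioms.
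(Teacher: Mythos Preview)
Your argument is correct. The direction (i) $\Rightarrow$ (ii) is essentially identical to the paper's: both take the infimum $m$ of the complement, use (IS3) to place $m$ in $S$, and then (IS2) to push past it.

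For (ii) $\Rightarrow$ (i) you take a somewhat different route. The paper argues \emph{directly}: given any nonempty $T$ bounded below, it lets $S$ be the set of \emph{lower bounds} of $T$, notes that (IS1) holds, and then does a three-way case split on which (if any) of (IS2), (IS3) fails, reading off $\inf T$ explicitly in each case. You argue by \emph{contrapositive}: starting from a set $A$ with no supremum, you take $S$ to be the set of \emph{non-upper-bounds} of $A$ and verify it is a proper inductive subset. The two candidate sets are in fact closely related --- your $S$ coincides with the paper's $S$ when one takes $T$ to be the set of upper bounds of $A$ --- but the logical packaging differs. Your contrapositive is arguably cleaner and makes the correspondence ``axiom $\leftrightarrow$ consequence of no sup'' transparent, while the paper's direct case analysis has the minor virtue of actually locating the infimum in each branch. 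Your reading of the typos in (IS2) and (IS3) is also correct.
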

\begin{proof}
(i) $\implies$ (ii): Let $S \subset X$ be 
inductive.  Seeking a contradiction, we suppose $S' = X \setminus S$ 
is nonempty.  Fix $a \in X$ satisfying (IS1).  Then $a$ is a lower bound for $S'$, so by hypothesis $S'$ 
has an infimum, say $y$.  Any element less than $y$ is strictly less than every element of $S'$, so $(-\infty,y) \subset 
S$.  By (IS3), $y \in S$.  If $y = \Top$, then $S' = \{\Top\}$ or $S' = \varnothing$: both are contradictions.  So $y < \Top$, 
and then by (IS2) there exists $z > y$ such that $[y,z] \subset S$ and thus $(-\infty,z] \subset S$.  Thus $z$ is a 
lower bound for $S'$ which is strictly larger than $y$, contradiction. \\
(ii) $\implies$ (i): Let $T \subset X$ be nonempty and bounded below by 
$a$.  Let $S$ be the set of lower bounds for $T$.  Then $(-\infty,a] \subset S$, so $S$ satisfies (IS1). \\
Case 1: Suppose $S$ does not satisfy (IS2): there is $x \in S$ with no $y \in X$ such that $[x,y] \subset S$.  Since $S$ is downward closed, $x$ is the top element of $S$ and $x = \inf(T)$.  \\
Case 2: Suppose $S$ does not satisfy (IS3): there is $x \in X$ such that $(-\infty,x) \in S$ but $x \not \in S$, 
i.e., there exists $t \in T$ such that $t < x$.  Then also $t \in S$, so $t$ is the least element of $T$: in particular 
$t = \inf T$.  \\
Case 3: If $S$ satisfies (IS2) and (IS3), then $S = X$, so $T = \{\Top\}$ and $\inf T = \Top$.  
\end{proof} 
\noindent
If in Theorem \ref{MAINTHM} we take $X$ to be a closed subinterval of $\R$, we recover Real Induction and its variants as discussed above.  Taking 
$X$ to be any well-ordered set, we recover \textbf{transfinite induction} and, in particular, ``ordinary'' mathematical induction.
\\ \\
Since $\R$ is the unique Dedekind complete ordered 
field, we deduce:

\begin{cor}
\label{INDCOMPTHM}
In an ordered field $F$, the following are equivalent: \\
(i) $F = \R$. \\
(ii) For all $a < b \in F$, 
if $S \subset [a,b]$ satisfies (IS1), (IS2) and (IS3), then 
$S = [a,b]$.  
\end{cor}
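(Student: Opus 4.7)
The plan is to show both implications by reducing to Theorem \ref{MAINTHM} applied to the closed interval $[a,b]$, viewed as a linearly ordered set in its own right, and then invoking the standard characterization of $\R$ (already cited in the excerpt) as the unique Dedekind complete ordered field.

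For (i) $\implies$ (ii): Assume $F = \R$. Given $a < b$ in $F$ and $S \subset [a,b]$ satisfying (IS1), (IS2), (IS3), I would apply Theorem \ref{MAINTHM} to the linearly ordered set $X = [a,b]$. The first thing to check is that $[a,b]$ is Dedekind complete; this is immediate from the Dedekind completeness of $\R$, since a nonempty subset of $[a,b]$ bounded above in $[a,b]$ is a fortiori bounded above in $\R$, and its supremum in $\R$ necessarily lies in $[a,b]$. Once we know $[a,b]$ is Dedekind complete, the (i) $\implies$ (ii) direction of Theorem \ref{MAINTHM} gives $S = [a,b]$. (One minor translation: in $X = [a,b]$ the element $a$ is the bottom element $\Bottom$, and condition (IS1) in $X$ — the existence of some $a' \in [a,b]$ with $(-\infty,a'] \cap [a,b] \subset S$ — is equivalent to $a \in S$.)

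For (ii) $\implies$ (i): I would verify that $F$ itself is Dedekind complete, whence $F = \R$. Let $T \subset F$ be nonempty and bounded below; fix a lower bound $a$ of $T$ and an element $b \in T$, so that $T \cap [a,b]$ is nonempty and bounded below by $a$, and $\inf T = \inf(T \cap [a,b])$ provided the latter exists. Now I would apply the (ii) $\implies$ (i) direction of Theorem \ref{MAINTHM} to the ordered set $X = [a,b]$. The hypothesis (ii) of the corollary says precisely that every inductive subset of $[a,b]$ is all of $[a,b]$, so Theorem \ref{MAINTHM} yields that $[a,b]$ is Dedekind complete. Hence $T \cap [a,b]$ has an infimum $m \in [a,b]$, and $m = \inf T$ in $F$.

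The main obstacle, such as it is, is purely bookkeeping: making sure that the conditions (IS1)--(IS3) as interpreted inside the ordered set $[a,b]$ agree with their interpretation as conditions on a subset of $F$ that happens to lie in $[a,b]$. Once this translation is made carefully — the key observation being that $a$ is the bottom element of $[a,b]$ and that "$(-\infty,x)$ inside $[a,b]$" means $[a,x)$ — both directions of the corollary follow directly from Theorem \ref{MAINTHM} together with the uniqueness characterization of $\R$.
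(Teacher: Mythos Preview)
Your proposal is correct and follows essentially the same approach as the paper: the corollary is stated immediately after Theorem \ref{MAINTHM} with the remark that $\R$ is the unique Dedekind complete ordered field, and the paper simply says ``we deduce'' without further argument. Your writeup supplies exactly the details the paper leaves implicit---that $[a,b]$ inherits Dedekind completeness from $\R$ for one direction, and that Dedekind completeness of every $[a,b]$ yields Dedekind completeness of $F$ for the other---so there is nothing to correct.
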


\begin{prob} Characterize the inductive subsets of $[0,1] \cap \Q$.
\end{prob}

\section{Real Induction In Calculus}

\begin{thm}(Intermediate Value Theorem (IVT)) 
\label{INTTHM1}
\label{IVT}
Let $f: [a,b] \ra \R$ be a continuous 
function, and let $L$ be any number in between $f(a)$ and $f(b)$.  Then 
there exists $c \in [a,b]$ such that $f(c) = L$.
\end{thm}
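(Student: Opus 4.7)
The plan is to deduce IVT from Theorem~\ref{REALINDTHM} via a contradiction argument. After a trivial reduction I may assume $f(a) \leq L \leq f(b)$ (the other case follows by applying the result to $-f$) and even $f(a) < L < f(b)$, since otherwise $a$ or $b$ already serves as the required $c$. Seeking a contradiction, I would suppose $f(x) \neq L$ for every $x \in [a,b]$ and put
\[ S = \{x \in [a,b] : f(x) < L\}. \]
The goal is to show that $S$ is inductive; Theorem~\ref{REALINDTHM} then forces $S = [a,b]$, contradicting $f(b) > L$.

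The verification of the three axioms is short and geographically localized. For (RI1), $a \in S$ because $f(a) < L$. For (RI2), if $x \in S$ and $x < b$, then $\varepsilon := L - f(x) > 0$, so continuity of $f$ at $x$ furnishes $\delta > 0$ with $f(y) < f(x) + \varepsilon = L$ on $[x, x+\delta] \cap [a,b]$; hence $[x, y_0] \subset S$ for some $y_0 > x$. For (RI3), suppose $a < x \leq b$ and $[a,x) \subset S$. Continuity at $x$ (from the left) gives $f(x) = \lim_{y \to x^-} f(y) \leq L$, and the standing assumption $f(x) \neq L$ then upgrades this to $f(x) < L$, so $x \in S$.

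The only place that requires any thought is (RI3): this is where one ``passes to the limit'', and it is precisely where the contradiction hypothesis $f(x) \neq L$ is needed -- without it one can only conclude $f(x) \leq L$, which is insufficient for membership in $S$. Beyond that, the proof uses nothing more than continuity at a single point for each step, confirming that the global content of IVT is absorbed entirely into the real induction principle.
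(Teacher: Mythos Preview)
Your proof is correct and follows essentially the same route as the paper: both reduce to the case of a continuous $f$ that never takes a particular value, define $S$ by a strict one-sided inequality, and verify (RI1)--(RI3) to force $S=[a,b]$ and obtain a contradiction. The only cosmetic differences are that the paper first translates so that the forbidden value is $0$ (working with $\{x:f(x)>0\}$ rather than your $\{x:f(x)<L\}$) and handles (RI3) by a short contradiction using continuity on $[x-\delta,x]$, whereas you pass to the left-hand limit directly; neither change is substantive.
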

\begin{proof}
It is easy to reduce the theorem to the following special case: if $f: [a,b] \ra \R \setminus \{0\}$ is continuous 
and $f(a) > 0$, then $f(b) > 0$. Put 
\[S = \{x \in [a,b] \ | \ f(x) > 0 \}, \]
 so $f(b) > 0$ iff $b \in S$.  We'll show $S = [a,b]$.  (RI1) By hypothesis, $f(a) > 0$, so $a \in S$.  (RI2) Let $x \in S$, $x < b$, so $f(x) > 0$.  Since $f$ is continuous at 
$x$, there exists $\delta > 0$ such that $f$ is positive on $[x,x+\delta]$, 
and thus $[x,x+\delta] \subset S$.  (RI3) Let $x \in (a,b]$ be such that $[a,x) \subset S$, i.e., $f$ is positive 
on $[a,x)$.  We claim that $f(x) > 0$.  Indeed, since $f(x) \neq 0$, the only other possibility is $f(x) < 0$, but if so, then by continuity there would exist $\delta > 0$ such that $f$ is negative on 
$[x-\delta,x]$, i.e., $f$ is both positive and negative at each point of $[x-\delta,x]$: contradiction!
\end{proof}

\begin{thm}(Extreme Value Theorem (EVT)) \\
\label{INTTHM2}
\label{EVT}
Let $f: [a,b] \ra \R$ be continuous. Then: \\
a) The function $f$ is bounded.  \\
b) The function $f$ attains its maximum and minimum values.
\end{thm}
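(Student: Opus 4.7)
The plan is to handle parts (a) and (b) separately, using real induction for (a) and then deducing (b) by a clean reciprocal trick from (a), in the style of the IVT proof above.

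For part (a), I would set
\[ S = \{x \in [a,b] \mid f \text{ is bounded on } [a,x]\} \]
and show that $S$ is inductive, so that by Theorem \ref{REALINDTHM} we have $b \in S$. (RI1) is immediate since $f$ is trivially bounded on $\{a\}$. For (RI2), given $x \in S$ with $x < b$, continuity of $f$ at $x$ provides some $\delta > 0$ such that $|f(t) - f(x)| < 1$ on $[x, x+\delta] \cap [a,b]$; combined with boundedness on $[a,x]$ this gives boundedness on $[a,y]$ for any $y \in (x, x+\delta] \cap [a,b]$. For (RI3), if $[a,x) \subset S$ with $x > a$, then by continuity at $x$ there is $\delta > 0$ with $f$ bounded on $[x-\delta, x]$, and since $x - \delta \in S$ we already have boundedness on $[a, x-\delta]$, so $f$ is bounded on $[a,x]$. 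I expect (RI3) to be the step where care is needed: one must remember to use continuity from the \emph{left} at $x$ and to invoke the hypothesis $[a,x) \subset S$ at the point $x - \delta$ (not at the limiting endpoint $x$ itself).

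For part (b), I would deduce the existence of a maximum from part (a) by the familiar reciprocal argument rather than running a second induction. Let $M = \sup f([a,b])$, which is finite by (a). If $f$ did not attain $M$, then the function $g(x) = \tfrac{1}{M - f(x)}$ would be continuous on $[a,b]$, hence bounded by part (a); but by definition of $M$ there are points where $f(x)$ is arbitrarily close to $M$, at which $g$ is arbitrarily large, a contradiction. The minimum is handled symmetrically, either by applying the same argument to $-f$ or to $1/(f - m)$ where $m = \inf f([a,b])$.

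The main obstacle, such as there is one, is the usual subtlety in (RI3): one must resist the temptation to say ``$f$ is bounded on $[a,x)$, so $f$ is bounded on $[a,x]$'', since a supremum over $[a,x)$ of the bounds from each $[a,y]$ with $y<x$ could in principle be infinite. The fix, as described, is to use continuity at $x$ to get a single bound on a left-neighborhood $[x-\delta,x]$ and then splice it with the bound inherited from $x-\delta \in S$. Everything else is routine application of the definitions.
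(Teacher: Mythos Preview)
Your proposal is correct and follows essentially the same route as the paper: real induction on the set $S=\{x\in[a,b]\mid f\text{ is bounded on }[a,x]\}$ for part (a), with the same splicing argument in (RI3), and the reciprocal trick $g=1/(M-f)$ (together with passage to $-f$) for part (b). Your explicit warning about the pitfall in (RI3)---that boundedness on each $[a,y]$ with $y<x$ does not by itself give a single bound on $[a,x)$---is a point the paper leaves implicit.
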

\begin{proof} a) Let $S = \{x \in [a,b] \ | \ f: [a,x] \ra \R \text{ is bounded} \}$.  (RI1): Evidently $a \in S$.  (RI2): Suppose $x \in S$, so that $f$ is bounded on $[a,x]$.  But then $f$ is continuous at $x$, so is bounded near 
$x$: for instance, there exists $\delta > 0$ such that for all $y \in [x-\delta,x+\delta]$, $|f(y)| \leq |f(x)| + 1$.  
So $f$ is bounded on $[a,x]$ and also on $[x,x+\delta]$ and thus on $[a,x+\delta]$.  (RI3): Suppose $x \in (a,b]$ and $[a,x) \subset S$.  Since $f$ is 
continuous at $x$, there exists $0 < \delta < x-a$ such that $f$ is bounded on $[x-\delta,x]$.  Since $a < x-\delta < x$, $f$ is bounded 
on $[a,x-\delta]$, so $f$ is bounded on $[a,x]$. \\
b) Let $m = \inf f([a,b])$ and $M = \sup f([a,b])$.  By part a) we have 
\[-\infty < m \leq M < \infty. \] We want to show that there exist 
$x_m, x_M \in [a,b]$ such that $f(x_m) = m$, $f(x_M) = M$, i.e., that the infimum and supremum are actually attained as values of $f$.  Suppose that there does not exist $x \in [a,b]$ with $f(x) = m$: then $f(x) > m$ for all $x \in [a,b]$ 
and the function $g_m: [a,b] \ra \R$ by $g_m(x) = \frac{1}{f(x) - m}$ is 
defined and continuous.  By the result of part a), $g_m$ is bounded, but this is absurd: by definition of the infimum, $f(x) - m$ takes values less than $\frac{1}{n}$ for any $n \in \Z+$ and thus $g_m$ takes values greater than 
$n$ for any $n \in \Z^+$ and is accordingly unbounded.  So indeed there must exist $x_m \in [a,b]$ such that $f(x_m) = m$.  Applying this argument to $-f$, 
we see that $f$ also attains its maximum value.  
\end{proof}

\noindent
Let $f: I \ra \R$.  For $\epsilon,\delta > 0$, let us say that $f$ 
is $(\epsilon,\delta)$-UC on $I$ if for all $x_1,x_2 \in I$, $|x_1-x_2| < \delta \implies |f(x_1) - f(x_2)| < \epsilon$.  By definition, $f: I \ra \R$ 
is \textbf{uniformly continuous} if for all $\epsilon > 0$, there is $\delta > 0$ 
such that $f$ is $(\epsilon,\delta)$-UC on $I$.

\begin{lemma}(Covering Lemma)
\label{PATCHINGLEMMA}
Let $a < b < c < d$ be real numbers, and let $f: [a,d] \ra \R$.  Suppose 
that for real numbers $\epsilon_1,\delta_1,\delta_2 > 0$, \\
$\bullet$ $f$ is $(\epsilon,\delta_1)$-UC on $[a,c]$ and \\
$\bullet$ $f$ is $(\epsilon,\delta_2)$-UC on $[b,d]$.  \\
Then $f$ is $(\epsilon,\min(\delta_1,\delta_2,c-b))$-UC on $[a,b]$.
\end{lemma}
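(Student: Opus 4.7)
The plan is a direct case analysis rather than an application of real induction --- indeed the whole point of the lemma is that two overlapping intervals on which $f$ is $(\epsilon,\cdot)$-UC can be patched into one, and the third term $c-b$ in the minimum is precisely what rules out the awkward ``straddling'' configuration. I would also note in passing that the stated conclusion, ``on $[a,b]$'', appears to be a typo for ``on $[a,d]$'', since the interesting content is obviously the union of the two given intervals.

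First I would set $\delta = \min(\delta_1,\delta_2,c-b)$ and fix two points $x_1 \leq x_2$ in $[a,d]$ with $x_2 - x_1 < \delta$. Since $b < c$, we have $[a,d] = [a,c] \cup [b,d]$, so each $x_i$ certainly lies in at least one of the two sub-intervals. The heart of the argument is the observation that, under my size constraint, both points must lie in a \emph{common} sub-interval: the only way this could fail is to have $x_1 \in [a,b)$ and $x_2 \in (c,d]$, but then $x_2 - x_1 > c - b \geq \delta$, contradicting $x_2 - x_1 < \delta$.

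With that dichotomy in hand, the proof finishes itself. If $x_1, x_2 \in [a,c]$, then since $|x_1 - x_2| < \delta \leq \delta_1$, the first hypothesis gives $|f(x_1) - f(x_2)| < \epsilon$; if instead $x_1, x_2 \in [b,d]$, the same bound follows from the second hypothesis together with $\delta \leq \delta_2$. The only genuine step is the straddling observation in the previous paragraph --- everything else is bookkeeping --- and that step depends in an essential way on both the hypothesis $b < c$ (so that the two intervals genuinely overlap) and the inclusion of $c - b$ in the definition of $\delta$.
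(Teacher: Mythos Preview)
Your proof is correct and is precisely the intended argument; the paper itself omits the proof entirely (``Left to the reader''), so there is nothing to compare against. Your identification of the typo --- the conclusion should be on $[a,d]$, not $[a,b]$ --- is also right, as is confirmed by how the lemma is invoked in the proof of Theorem~\ref{UCT}.
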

\begin{proof}
Left to the reader.
%Suppose $x_1 < x_2 \in I$ are such that $|x_1-x_2| < \delta$.  Then it cannot %be the case that both 
%$x_1 < b$ and $c < x_2$: if so, $x_2 - x_1 > c-b \geq \delta$.  Thus we 
%must have either that $b \leq x_1 < x_2$ or $x_1 < x_2 \leq c$.  If 
%$b \leq x_1 < x_2$, then $x_1,x_2 \in [b,d]$ and $|x_1-x_2| < \delta \leq %\delta_2$, so $|f(x_1) - f(x_2)| < \epsilon$.  Similarly, if 
%$x_1 < x_2 \leq c$, then $x_1,x_2 \in [a,c]$ and $|x_1-x_2| < \delta \leq 
%\delta_1$, so $|f(x_1) - f(x_2)| < \epsilon$.  
\end{proof}

\begin{thm}(Uniform Continuity Theorem) 
\label{INTTHM3} 
\label{UNIFORMCONTINUITYTHM}
\label{UCT}
Let $f: [a,b] \ra \R$ be continuous.  Then $f$ is uniformly 
continuous on $[a,b]$.
\end{thm}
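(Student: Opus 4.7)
The plan is to fix $\epsilon > 0$ and apply Real Induction (Theorem \ref{REALINDTHM}) to the set
\[ S = \{ x \in [a,b] \mid f \text{ is } (\epsilon,\delta)\text{-UC on } [a,x] \text{ for some } \delta > 0 \}. \]
Showing $S = [a,b]$ gives $b \in S$, which furnishes a $\delta > 0$ witnessing $(\epsilon,\delta)$-UC on $[a,b]$; since $\epsilon > 0$ was arbitrary, this proves uniform continuity. (RI1) is immediate: on the degenerate interval $\{a\}$ any $\delta$ works.

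For both (RI2) and (RI3), the key observation is that pointwise continuity yields a ``trivial'' local patch of uniform continuity. Precisely, at any $x \in [a,b]$, continuity gives $\eta > 0$ such that $y \in [a,b]$ and $|y - x| < \eta$ imply $|f(y) - f(x)| < \epsilon/2$. Then for any $y_1,y_2$ in $[x - \eta/2, x + \eta/2] \cap [a,b]$, the triangle inequality forces $|f(y_1)-f(y_2)| < \epsilon$ \emph{regardless of how close $y_1$ and $y_2$ are}, so $f$ is $(\epsilon,\delta')$-UC on that small interval for every $\delta' > 0$. The Covering Lemma (Lemma \ref{PATCHINGLEMMA}) will let me splice this local patch onto the uniform continuity I already possess on $[a,x]$ (or on $[a,z]$ for $z$ just below $x$).

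For (RI2): given $x \in S$ with $x < b$, I have $f$ being $(\epsilon,\delta_1)$-UC on $[a,x]$, and the local patch gives $(\epsilon,\delta_2)$-UC on $[x - \eta/2, x + \eta/2] \cap [a,b]$ for any $\delta_2 > 0$. Shrink $\eta$ if necessary so that $x + \eta/2 \le b$, then apply the Covering Lemma to the overlapping pair $[a,x]$ and $[x - \eta/2, x + \eta/2]$ (when $x > a$; when $x = a$ the local patch alone already does the job) to get $(\epsilon,\delta)$-UC on $[a, x + \eta/2]$. Setting $y := x + \eta/2$, every $z \in [x,y]$ satisfies $[a,z] \subset [a,y]$, so $z \in S$, establishing $[x,y] \subset S$.

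For (RI3): given $x \in (a,b]$ with $[a,x) \subset S$, produce $\eta > 0$ as above and pick any $z \in (\max(a, x - \eta/2), x)$; then $z \in S$ supplies $(\epsilon,\delta_1)$-UC on $[a,z]$, and the local patch gives $(\epsilon,\delta_2)$-UC on $[x - \eta/2, x]$ for every $\delta_2 > 0$. The pair $[a,z]$ and $[x - \eta/2, x]$ overlap on the nontrivial interval $[x - \eta/2, z]$, so the Covering Lemma yields $(\epsilon,\delta)$-UC on $[a,x]$, i.e., $x \in S$.

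The main obstacle is bookkeeping rather than conceptual: one must verify the strict inequalities $a < b < c < d$ in the Covering Lemma in each application and handle the mild boundary case $x = a$ in (RI2) separately (since there is no room for overlap when $[a,x]$ degenerates to a point). Once those are checked, the conclusion $S = [a,b]$ follows from Theorem \ref{REALINDTHM}, and the theorem is proved.
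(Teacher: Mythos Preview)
Your proof is correct and is essentially the same argument as the paper's: fix $\epsilon$, apply Real Induction to the set of $x$ for which $f$ is $(\epsilon,\delta)$-UC on $[a,x]$ for some $\delta$, use continuity at $x$ plus the triangle inequality to manufacture a local patch of uniform continuity, and splice via the Covering Lemma. Your bookkeeping is in fact slightly more careful than the paper's (you flag the degenerate case $x=a$ in (RI2) and guard with $\max(a,x-\eta/2)$ in (RI3)), but the structure and the key ideas coincide.
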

\begin{proof}
%For $\epsilon, \delta > 0$, let us say that $f$ is $(\epsilon,\delta)$-UC on %$[a,b]$ if 
%for all $\epsilon > 0$, there exists $\delta > 0$ such that for all $x_1,x_2 %\in [a,b]$ with $|x_1- x_2| \leq \delta$, $|f(x_1) - f(x_2)| \leq \epsilon$.  %To show that $f$ is uniformly continuous, we need to show that for all %$\epsilon > 0$ there exists $\delta > 0$ such that $f$ is $(\epsilon,\delta)$-UC on $[a,b]$.  
For $\epsilon > 0$, let $S(\epsilon)$ be the set of $x \in [a,b]$ such that there exists $\delta > 0$ such that $f$ is $(\epsilon,\delta)$-UC on $[a,x]$.  
To show that $f$ is uniformly continuous on $[a,b]$, it suffices to show that $S(\epsilon) = [a,b]$ for all $\epsilon > 0$.  We will 
show this by Real Induction.  (RI1): Trivially $a \in S(\epsilon)$.  (RI2): Suppose $x \in S(\epsilon)$, so there exists 
$\delta_1 > 0$ such that $f$ is $(\epsilon,\delta_1)$-UC on $[a,x]$. Moreover, since $f$ is continuous at $x$, there is $\delta_2 > 0$ 
such that for all $c \in [x,x+\delta_2]$, $|f(c) - f(x)| < \frac{\epsilon}{2}$.  Thus for all $c_1, c_2 \in [x-\delta_2,x+\delta_2]$, 
\[ |f(c_1) - f(c_2)| = |f(c_1) - f(x) + f(x) - f(c_2)| \leq |f(c_1) - f(x)| 
+ |f(c_2) - f(x)| < \epsilon, \]
so $f$ is $(\epsilon,\delta_2)$-UC on $[x-\delta_2,x+\delta_2]$.
Apply the Covering Lemma to $f$ with $a < x-\delta_2 < x < x + \delta_2$: we conclude $f$ is $(\epsilon,\min(\delta,\delta_2,x-(x-\delta_2))) = (\epsilon,\min(\delta_1,\delta_2))$-UC on $[a,x+\delta_2]$.  Thus $[x,x+\delta_2] \subset S(\epsilon)$.  (RI3): Suppose $[a,x) \subset S(\epsilon)$.  As above, since $f$ is continuous at $x$, there is $\delta_1 > 0$ such that $f$ is $(\epsilon,\delta_1)$-UC 
on $[x-\delta_1,x]$.  Since $x - \frac{\delta_1}{2} < x$, by hypothesis 
there exists $\delta_2$ such that $f$ is $(\epsilon,\delta_2)$-UC on 
$[a,x-\frac{\delta_1}{2}]$.  Apply the Covering Lemma to $f$ 
with $a < x - \delta_1 < x - \frac{\delta_1}{2} < x$: we conclude 
$f$ is $(\epsilon,\min(\delta_1,\delta_2,x-\frac{\delta_1}{2} - (x-\delta_1))) = 
(\epsilon,\min(\frac{\delta_1}{2},\delta_2))$-UC on $[a,x]$.  Thus 
$x \in S(\epsilon)$.  
\end{proof}

\begin{thm}
\label{CONTINTTHM}
\label{DIT}
Let $f: [a,b] \ra \R$ be continuous.  Then $f$ is Darboux integrable.
\end{thm}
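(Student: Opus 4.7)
The plan is to apply Real Induction directly (rather than routing through the UCT just proved). Fix $\epsilon > 0$ and define
\[ S(\epsilon) = \{x \in [a,b] : \text{some partition } P \text{ of } [a,x] \text{ satisfies } U(f,P) - L(f,P) \leq \epsilon(x-a)\}, \]
with the convention that the degenerate partition $\{a\}$ of $[a,a]$ has $U - L = 0$. If I can show $S(\epsilon) = [a,b]$ for every $\epsilon > 0$, then taking $x = b$ yields a partition with $U(f,P) - L(f,P) \leq \epsilon(b-a)$, and since $\epsilon$ is arbitrary $f$ is Darboux integrable.

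For (RI1), $a \in S(\epsilon)$ trivially. For (RI2), given $x \in S(\epsilon)$ with $x < b$ and a witnessing partition $P$ of $[a,x]$, I use continuity at $x$ to pick $\delta > 0$ (with $x+\delta \leq b$) so that $|f(t) - f(x)| < \epsilon/2$ for all $t \in [x, x+\delta]$; the oscillation of $f$ on any subinterval $[x,y] \subseteq [x,x+\delta]$ is then at most $\epsilon$. Adjoining $y$ to $P$ gives a partition of $[a,y]$ with $U - L \leq \epsilon(x-a) + \epsilon(y-x) = \epsilon(y-a)$, so $[x,x+\delta] \subset S(\epsilon)$. For (RI3), given $x \in (a,b]$ with $[a,x) \subset S(\epsilon)$, continuity at $x$ again provides $\delta > 0$ with oscillation at most $\epsilon$ on $[x-\delta,x]$; pick any $y \in (\max(a,x-\delta), x)$ with witnessing partition $P$ of $[a,y]$, adjoin $x$, and conclude $U - L \leq \epsilon(y-a) + \epsilon(x-y) = \epsilon(x-a)$, hence $x \in S(\epsilon)$.

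The central design choice—and the only thing I expect to require real thought—is the \emph{weight} $\epsilon(x-a)$ in the definition of $S(\epsilon)$. A flat bound $U - L \leq \epsilon$ will not propagate: in (RI2) and (RI3) I add an interval whose contribution is nonzero, so the budget must grow linearly with the length covered so far. With the weighted definition, each inductive step deposits exactly $\epsilon$ times the length of the newly absorbed subinterval, and the budget matches the accumulated slack perfectly. The remaining bookkeeping—in particular, choosing $y$ close enough to $x$ in (RI3) so that $[y,x]$ lies in the $\delta$-window where oscillation is controlled—is routine.
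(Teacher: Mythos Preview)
Your proof is correct and follows essentially the same route as the paper: define $S(\epsilon)$ via a length-weighted bound $U-L \leq \epsilon(x-a)$, then verify (RI1)--(RI3) by adjoining a short interval on which continuity bounds the oscillation by $\epsilon$. The only cosmetic differences are your use of $\leq$ in place of strict inequality and your explicit commentary on why the weighted budget is the right definition---both harmless.
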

\begin{proof}
By Darboux's Criterion \cite[Thm. 13.2]{Spivak}, $f$ is Darboux integrable iff 
for all $\epsilon > 0$, there exists a partition $\PP$ of $[a,b]$ such that 
$U(f,\PP) - L(f,\PP) < \epsilon$.  It is convenient to prove instead the following equivalent statement: for every $\epsilon > 0$, there exists a partion $\PP$ of $[a,b]$ such that $U(f,\PP) - L(f,\PP) < (b-a)\epsilon$.  \\ \indent
Fix $\epsilon > 0$, and let $S(\epsilon)$ be the set of $x \in [a,b]$ 
such that there exists a partition $\PP_x$ of $[a,b]$ with $U(f,\PP_x) - L(f,\PP_x) < \epsilon$.  We want to show $b \in S(\epsilon)$, so it suffices to show $S(\epsilon) = [a,b]$.  In fact it is necessary and sufficient: observe that if $x \in S(\epsilon)$ and $a \leq y \leq x$, then also $y \in S(\epsilon)$. We will show $S(\epsilon) = [a,b]$ by Real Induction.  \\
(RI1) The only partition of $[a,a]$ is $\PP_a = \{a\}$, and for this partition we have $U(f,\PP_a) = L(f,\PP_a) = f(a) \cdot 0 = 0$, so $U(f,\PP_a) - L(f,\PP_a) = 0 < \epsilon$.  \\
(RI2) Suppose that for $x \in [a,b)$ we have $[a,x] \subset S(\epsilon)$.  
We must show that there is $\delta > 0$ such that $[a,x+\delta] \subset S(\epsilon)$, and by the above observation it is enough to find $\delta > 0$ 
such that $x+\delta \in S(\epsilon)$: we must find a partition $\PP_{x+\delta}$ of $[a,x+\delta]$ such that $U(f,\PP_{x+\delta}) - L(f,\PP_{x+\delta}) < (x+\delta-a)\epsilon)$.  Since $x \in S(\epsilon)$, 
there is a partition $\PP_x$ of $[a,x]$ with $U(f,\PP_x) - L(f,\PP_x) < 
(x-a)\epsilon$.  Since $f$ is continuous at $x$, we can make the difference between the supremum and the infimum of $f$ as small as we want by taking a sufficiently small interval around $x$: i.e., there is $\delta > 0$ such that $\sup(f,[x,x+\delta]) - \inf(f,[x,x+\delta]) < \epsilon$.  Now take the smallest partition 
of $[x,x+\delta]$, namely $\PP' = \{x,x+\delta\}$.  Then $U(f,\PP') - L(f,\PP') = 
(x+\delta-x) (\sup(f,[x,x+\delta]) - \inf(f,[x,x+\delta])) < \delta \epsilon$.  
Thus if we put $\PP_{x+\delta} = \PP_x \cup \PP'$ and use the fact that upper / lower sums add when split into subintervals, we have 
\[ U(f,\PP_{x+\delta}) - L(f,\PP_{x+\delta}) = U(f,\PP_x) + U(f,\PP') - 
L(f,\PP_x) - L(f,\PP') \]
\[ = U(f,\PP_x) - L(f,\PP_x) + 
U(f,\PP') - L(f,\PP') < (x-a) \epsilon + \delta \epsilon = (x+\delta -a)\epsilon. \]
(RI3) Suppose that for $x \in (a,b]$ we have $[a,x) \subset S(\epsilon)$.  
We must show that $x \in S(\epsilon)$.  The argument for this is the same as for (RI2) except we use the interval $[x-\delta,x]$ instead of $[x,x+\delta]$.  Indeed: since $f$ is continuous at $x$, there exists 
$\delta > 0$ such that $\sup(f,[x-\delta,x]) - \inf(f,[x-\delta,x]) < \epsilon$.  
Since $x-\delta < x$, $x-\delta \in S(\epsilon)$ and thus there exists a 
partition $\PP_{x-\delta}$ of $[a,x-\delta]$ such that $U(f,\PP_{x-\delta}) = 
L(f,\PP_{x-\delta}) = (x-\delta-a)\epsilon$.  Let $\PP' = \{x-\delta,x\}$ and 
let $\PP_x = \PP_{x-\delta} \cup \PP'$.  Then 
\[ U(f,\PP_x) - L(f,\PP_x) = U(f,\PP_{x-\delta}) + U(f,\PP') - 
(L(f,\PP_{x-\delta}) + L(f,\PP')) \]
\[ = (U(f,\PP_{x-\delta}) - L(f,\PP_{x-\delta})) + \delta (\sup(f,[x-\delta,x]) - \inf(f,[x-\delta,x])) \]
\[ < (x-\delta - a)\epsilon + \delta \epsilon = (x-a) \epsilon. \]
\end{proof}
\noindent
Remark: Spivak's text \cite{Spivak} relegates uniform continuity to an appendix, 
which creates a slight quandary when it comes to proving Theorem \ref{DIT}.  He gives the standard proof anyway \cite[Thm. 13.3]{Spivak}, but also \cite[pp. 292-293]{Spivak} an alternative argument establishing equality 
of the upper and lower integrals by differentiation.  This method goes back at least to M.J. Norris \cite{Norris}.  The Real Induction proof given above 
seems quite different from both of these.  

\begin{thm}(Bolzano-Weierstrass)
\label{BWTHM}
An infinite subset of $[a,b]$ has a limit point.
\end{thm}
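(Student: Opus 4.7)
\medskip
\noindent
\textbf{Proof proposal.} The plan is to argue by contradiction using Real Induction. Let $T \subset [a,b]$ be infinite, and suppose for contradiction that $T$ has no limit point in $[a,b]$. Then for every $x \in [a,b]$ there exists $\delta_x > 0$ such that $T \cap (x - \delta_x, x+\delta_x)$ is contained in $\{x\}$, hence has at most one element. The goal is to exploit this local finiteness to conclude, by Real Induction, that $T \cap [a,b]$ is globally finite, giving the desired contradiction.

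To that end, I would define
\[ S = \{ x \in [a,b] \ | \ T \cap [a,x] \text{ is finite} \} \]
and verify (RI1), (RI2), (RI3). For (RI1), $T \cap \{a\}$ has at most one element. For (RI2), if $x \in S$, pick $\delta_x$ as above; then $T \cap [a, x+\delta_x]$ is the union of the finite set $T \cap [a,x]$ and a subset of $T \cap (x, x+\delta_x) \subset \{x\}$, hence is finite. This shows $[x,x+\delta_x] \subset S$. For (RI3), suppose $[a,x) \subset S$ and choose $\delta_x > 0$ as above; then $x - \delta_x/2 \in [a,x) \subset S$, so $T \cap [a, x-\delta_x/2]$ is finite, while $T \cap (x-\delta_x/2, x]$ is a subset of $T \cap (x-\delta_x, x+\delta_x) \subset \{x\}$ and so has at most one element. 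Hence $T \cap [a,x]$ is finite and $x \in S$.

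By Theorem \ref{REALINDTHM}, $S = [a,b]$; in particular $b \in S$, so $T = T \cap [a,b]$ is finite, a contradiction. The only real obstacle is the bookkeeping at the left-continuity step (RI3), where one must not take $x - \delta_x$ itself (which need only lie in $[a,x)$ if $\delta_x < x - a$); choosing the midpoint $x - \delta_x/2$, or more carefully any point of $[a,x) \cap (x-\delta_x, x)$, sidesteps this issue cleanly. Everything else is essentially a direct translation of ``no limit point'' into the local conditions demanded by Real Induction.
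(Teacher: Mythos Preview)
Your proof is correct. There are a couple of cosmetic points --- in (RI2) you should take $y=\min(x+\delta_x,b)$ (or simply shrink $\delta_x$) to stay inside $[a,b]$, and the inclusion $T\cap(x,x+\delta_x)\subset\{x\}$ is really saying this intersection is empty --- but nothing of substance is missing, and you have already flagged and handled the only genuine bookkeeping issue at (RI3).

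Your route, however, is not the one the paper takes. The paper argues \emph{directly}: it sets
\[
S=\{\,x\in[a,b]:\ \text{if }\mathcal{A}\cap[a,x]\text{ is infinite, then it has a limit point}\,\},
\]
and at each inductive step performs a trichotomy (is $\mathcal{A}\cap[a,x]$ already infinite? is $\mathcal{A}\cap[a,x+\delta]$ finite for some $\delta$? otherwise $x$ itself is the limit point). You instead argue by contradiction: assuming no limit point gives you a uniform local finiteness datum $\delta_x$ at every point, and your inductive set $S=\{x:\mathcal{A}\cap[a,x]\text{ finite}\}$ is then verified with no case splits at all. What you gain is brevity and a simpler $S$; what the paper's version gains is that the limit point is exhibited explicitly in the course of the induction rather than being conjured by \emph{reductio}. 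In effect, your argument is the Real Induction proof that a subset of $[a,b]$ with no limit point must be finite --- the contrapositive --- whereas the paper's proof locates the accumulation point directly.
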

\begin{proof}
Let $\mathcal{A} \subset [a,b]$, and let $S$ be the set of $x$ in $[a,b]$ such that \emph{if} $\mathcal{A} \cap [a,x]$ is infinite, it has a limit point.  It suffices to show $S = [a,b]$, which we will do by Real Induction.  (RI1) is clear.  (RI2) Suppose $x \in [a,b) \cap S$.  If $\mathcal{A} \cap [a,x]$ is infinite, then it has a limit point and hence so does $\mathcal{A} \cap [a,b]$: thus $S = [a,b]$.  If for some $\delta > 0$, $\mathcal{A} \cap [a,x+\delta]$ is finite, then $[x,x+\delta] \subset S$.  Otherwise $\mathcal{A} \cap [a,x]$ is finite but $\mathcal{A} \cap 
[a,x+\delta]$ is infinite for all $\delta > 0$, and then $x$ is a limit 
point for $\mathcal{A}$ and $S = [a,b]$ as above.  (RI3) If $[a,x) \subset S$, 
then: either $\mathcal{A} \cap [a,y]$ is infinite for some $y < x$, so 
$x \in S$; or $\mathcal{A} \cap [a,x]$ is finite, so $x \in S$; or $\mathcal{A} \cap [a,y]$ is finite for all $y < x$ and $\mathcal{A} \cap [a,x]$ 
is infinite, so $x$ is a limit point of $\mathcal{A} \cap [a,x]$ and $x \in S$.
\end{proof} 
\noindent
Remark: The standard proofs of Theorem \ref{BWTHM} use monotone subsequences, a 
dissection / nested intervals argument, or deduce the result from compactness 
of $[a,b]$.  The Real Induction proof given above seems to be new.

\section{Connectedness and Compactness}
\noindent
The preceding applications of Real Induction were at the honors calculus 
level.  Next we consider applications to undergraduate real 
analysis and topology, specifically to results involving connectedness and compactness of subsets of a linearly ordered topological space.  We assume 
familiarity with the following standard results.

\begin{prop}
\label{EASYCONNECTEDPROP}
\label{4.0}
Let $f: X \ra Y$ be a surjective continuous map of topological spaces.  If $X$ is connected, then so is $Y$. 
\end{prop}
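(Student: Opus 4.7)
The plan is to prove the contrapositive: assume $Y$ is disconnected and deduce that $X$ is disconnected. By the definition of disconnectedness, disconnectedness of $Y$ means there exist disjoint nonempty open sets $U, V \subset Y$ with $Y = U \cup V$. The natural move is to pull this decomposition back through $f$ and show it gives a disconnection of $X$.

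Concretely, I would set $U' = f^{-1}(U)$ and $V' = f^{-1}(V)$. Continuity of $f$ immediately gives that $U'$ and $V'$ are open in $X$. The set-theoretic identities $f^{-1}(U) \cap f^{-1}(V) = f^{-1}(U \cap V) = f^{-1}(\varnothing) = \varnothing$ and $f^{-1}(U) \cup f^{-1}(V) = f^{-1}(U \cup V) = f^{-1}(Y) = X$ handle disjointness and covering with no effort. At this point $X = U' \sqcup V'$ is a partition into disjoint open sets, contradicting connectedness of $X$ \emph{provided} both $U'$ and $V'$ are nonempty.

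The one spot where some care is needed, and the only place the hypotheses are nontrivially used, is showing nonemptiness of $U'$ and $V'$, and this is exactly where surjectivity of $f$ enters: pick $y \in U$ (possible since $U \neq \varnothing$), then by surjectivity there is $x \in X$ with $f(x) = y$, so $x \in U'$; the same argument applied to $V$ shows $V' \neq \varnothing$. No serious obstacle arises; the proof is essentially a bookkeeping exercise, and the only pedagogical pitfall worth flagging is remembering that surjectivity — not merely continuity — is what rules out the degenerate case where one of the preimages is empty.
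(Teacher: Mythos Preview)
Your proof is correct and is the standard argument. Note, however, that the paper does not actually supply a proof of this proposition: it is stated as one of the ``standard results'' with which the reader is assumed to be familiar, so there is nothing in the paper to compare your argument against.
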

%\begin{proof}
%We prove the contrapositive: if $A_1, A_2$ are two nonempty, disjoint open and %closed subsets with $A_1 \cup A_2 = Y$, then $f^{-1}(A_1)$, $f^{-1}(A_2)$ are %two nonempty, disjoint open 
%and closed subsets of $X$ with $f^{-1}(A_1) \cup f^{-1}(A_2) = X$.  
%\end{proof}

\begin{prop}
\label{4.1}
Let $f: X \ra Y$ be a surjective continuous map of topological spaces.  If $X$ is compact, then so is $Y$.
\end{prop}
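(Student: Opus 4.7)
The plan is to verify the defining property of compactness directly in $Y$: every open cover admits a finite subcover. I would start with an arbitrary open cover $\{V_\alpha\}_{\alpha \in A}$ of $Y$ and aim to extract a finite subcover.

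First I would pull the cover back through $f$. By continuity, each preimage $f^{-1}(V_\alpha)$ is open in $X$, and I would observe that $\{f^{-1}(V_\alpha)\}_{\alpha \in A}$ is in fact a cover of $X$: for any $x \in X$, the image $f(x) \in Y$ lies in some $V_\alpha$, so $x \in f^{-1}(V_\alpha)$. (Note this step uses only that the $V_\alpha$ cover $Y \supset f(X)$, not surjectivity.) Then I would invoke compactness of $X$ to produce indices $\alpha_1,\ldots,\alpha_n$ with $X = f^{-1}(V_{\alpha_1}) \cup \cdots \cup f^{-1}(V_{\alpha_n})$.

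Finally I would push this finite family back down through $f$ to a finite subcover of $Y$. Given any $y \in Y$, surjectivity yields some $x \in X$ with $f(x) = y$; then $x \in f^{-1}(V_{\alpha_i})$ for some $i$, hence $y = f(x) \in V_{\alpha_i}$. Therefore $\{V_{\alpha_1},\ldots,V_{\alpha_n}\}$ covers $Y$.

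There is no real obstacle here: the argument is a formal manipulation of the definitions of continuity, compactness, and surjectivity, and in particular makes no use of Real Induction or of any order-theoretic structure. The only role of surjectivity is to guarantee that the finite family chosen in $X$ descends to a cover of all of $Y$ rather than merely of $f(X)$; indeed, the slightly stronger statement that $f(X)$ is compact whenever $X$ is holds without the surjectivity hypothesis.
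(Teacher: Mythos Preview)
Your proof is correct and is exactly the standard argument. The paper itself does not prove this proposition: it is listed among the ``standard results'' with which the reader is assumed to be familiar, so there is no proof in the paper to compare against. Your write-up is clean and even notes the slight refinement that surjectivity is only needed at the final step (equivalently, that $f(X)$ is always compact), which is a nice observation.
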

\noindent
In real analysis, Theorem \ref{IVT} is \emph{analyzed} as the conjunction of Proposition \ref{4.0} and the deep fact that the interval $[a,b] \subset \R$ is connected.  Similarly, Theorem \ref{EVT} is analyzed as the conjunction of the easy Proposition \ref{4.1} and the deep fact that the interval $[a,b] \subset \R$ is compact.   Real Induction is very well-suited to proving connectedness and compactness results in ordered spaces.  

\begin{thm}
\label{CONNECTEDTHM}
\label{4.2}
The interval $[a,b]$ is connected.
\end{thm}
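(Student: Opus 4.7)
The plan is to apply Real Induction (Theorem~\ref{REALINDTHM}) and argue by contradiction: suppose $[a,b] = U \sqcup V$ is a separation, i.e., $U$ and $V$ are disjoint, nonempty, and both relatively open in $[a,b]$. After relabelling if necessary, I may assume $a \in U$. Set $S = U$. I will verify that $S$ is inductive; Real Induction will then force $S = [a,b]$, contradicting $V \neq \varnothing$.

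(RI1) holds by choice: $a \in U = S$. For (RI2), at any $x \in S$ with $x < b$, openness of $U$ in $[a,b]$ yields $\delta > 0$ with $[x, x+\delta] \subset U = S$ (shrinking $\delta$ if needed so that $x+\delta \leq b$), and we take $y = x+\delta$. For (RI3), suppose $x \in (a,b]$ with $[a,x) \subset S = U$, and suppose for contradiction that $x \in V$; then openness of $V$ in $[a,b]$ supplies $\delta > 0$ with $(x-\delta, x] \subset V$, so $(x-\delta, x) \subset U \cap V = \varnothing$, which is absurd.

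The crux is (RI3): this is where the contradiction is actually extracted, using openness of $V$ at the supposed ``leftmost point'' of $V$ to collide with $[a,x) \subset U$. The (RI2) step merely uses openness of $U$ to push $S$ slightly to the right, and (RI1) is automatic. There is no substantive technical obstacle; the only genuine design choice is recognizing that a hypothetical separation lets one induct on one of its two open pieces, whereupon the three axioms encode exactly openness on the right, openness on the left, and the initial condition. This is a useful illustration of how Real Induction reformulates ``no separation'' as ``one open piece sweeps across the entire interval.''
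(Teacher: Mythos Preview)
Your proof is correct and is essentially the paper's own argument: set $S$ equal to the piece of the separation containing $a$, then verify (RI1)--(RI3) from openness of $U$ and of its complement $V$. The only cosmetic difference is that the paper phrases (RI3) as ``$A$ is closed'' rather than ``$V$ is open,'' but these are of course the same observation.
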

\begin{proof} 
Suppose $[a,b] = A \cup B$, with $A$ and $B$ open and closed, and $A \cap B = \emptyset$.  We assume $a \in A$ and prove by Real Induction that $A = [a,b]$: (RI1) is immediate, (RI2) holds because $A$ is open, and (RI3) holds because $A$ is closed.  We're done! 
\end{proof}
\noindent
The connectedness of $[a,b]$ follows easily from the Intermediate Value Theorem: a separation of $[a,b]$ yields a surjective 
continuous function $f: [a,b] \ra \{ 0,1\}$.  But the proof of Theorem \ref{4.2} is shorter and easier than that of Theorem \ref{IVT}.   
\\ \\
This circle of ideas can be pushed further using the concepts and results of $\S 2.3$.  Namely, let $(X,\leq)$ be a linearly ordered set.  A \textbf{bounded open interval} in a linearly ordered set 
$X$ is an interval of the form $(a,b)$ or $[\Bottom,b)$ (if $X$ has a bottom element) 
or $(a,\Top]$ (if $X$ has a top element).  The bounded 
open intervals form a base for the \textbf{order topology} on $X$.  A linearly ordered set $X$ is \textbf{dense} if for all $a < b \in X$, 
there exists $c$ with $a < c < b$.

\begin{lemma}
\label{LEMMA1}
Let $S$ be a subset of a Dedekind complete ordered set $(X,\leq)$.  If $S$ is closed in $X$ in the order topology, then $(S,\leq)$ is Dedekind complete.
\end{lemma}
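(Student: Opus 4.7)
The plan is to prove Dedekind completeness of $(S,\leq)$ directly from the definition: take a nonempty $T \subseteq S$ that is bounded above in $S$, and produce $\sup_S T$. The idea is to first find the supremum in the ambient complete set $X$, and then use closedness of $S$ in the order topology to show that this supremum actually lies in $S$.

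First I would note that any upper bound for $T$ in $S$ is also an upper bound for $T$ in $X$, so $T$ is bounded above in $X$. By Dedekind completeness of $X$, the element $s := \sup_X T$ exists. The central step is to show $s \in S$. If $s \in T$, then $s \in S$ immediately. Otherwise, I claim $s$ lies in the closure of $T$ in the order topology. A basic open neighborhood of $s$ has the form $(a,b)$ with $a < s < b$, or $(a,\Top]$ in case $s = \Top$ (or $[\Bottom, b)$ if $s = \Bottom$, which would force $T = \{\Bottom\}$ and we would be done). In every case, since $a < s = \sup_X T$, the element $a$ fails to be an upper bound for $T$, so there is some $t \in T$ with $a < t \leq s$; clearly $t$ belongs to the neighborhood. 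Thus $s \in \overline{T} \subseteq \overline{S} = S$, using that $S$ is closed.

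Having placed $s$ in $S$, I would finish by observing that $s$ is the least upper bound of $T$ inside $S$: it is an upper bound because it was one in $X$, and any $S$-upper bound of $T$ is an $X$-upper bound, hence $\geq s$. This gives Dedekind completeness of $(S,\leq)$.

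The only delicate point, and the main thing to be careful about, is the handling of the order topology when $s$ is a top or bottom element of $X$, so that the relevant basic neighborhoods are half-open; once one writes out the cases, the argument that $a$ fails to be an upper bound still produces a witness $t \in T$ inside the neighborhood. Beyond that, the proof is a routine ``closed subset inherits completeness'' argument.
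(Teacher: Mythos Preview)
Your argument is correct and is exactly the standard ``closed subsets inherit completeness'' proof; the paper itself does not give a proof, merely noting that the lemma is straightforward and left to the reader. Your write-up would serve perfectly well as that omitted argument, and your care with the endpoint cases $s = \Top$ and $s = \Bottom$ is appropriate.
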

\begin{proof} This is straightforward and left to the reader.
\end{proof}
\noindent
The converse of Lemma \ref{LEMMA1} does not hold, 
even for subsets of $\R$: indeed, \emph{any} subinterval of $\R$ is Dedekind complete.

\begin{thm}
\label{THM1}
For a linearly ordered set $X$, TFAE: \\
(i) $X$ is dense and Dedekind complete.  \\
(ii) $X$ is connected in the order topology.  
 \end{thm}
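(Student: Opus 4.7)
My plan is to prove (i) $\implies$ (ii) by applying the linearly ordered induction principle (Theorem \ref{MAINTHM}) inside a closed subinterval, and to prove (ii) $\implies$ (i) by contrapositive, producing a clopen partition from a failure of either density or Dedekind completeness. For the forward direction, suppose toward contradiction that $X = A \sqcup B$ is a clopen partition with both parts nonempty; pick $\alpha \in A$ and $\beta \in B$ with $\alpha < \beta$ (without loss of generality). By Lemma \ref{LEMMA1} the closed subspace $[\alpha,\beta]$ is Dedekind complete in the induced order, so Theorem \ref{MAINTHM} applies to it. I would show that $S := A \cap [\alpha,\beta]$ is inductive in $[\alpha,\beta]$: (IS1) holds because $\alpha$ is the minimum of $[\alpha,\beta]$ and lies in $A$; (IS2), for $x \in S$ with $x < \beta$, follows from openness of $A$ and density of $X$, which together supply some $y$ with $x < y \leq \beta$ and $[x,y] \subset A$; (IS3), for $x \in [\alpha,\beta]$ with $[\alpha,x) \subset A$, goes by contradiction -- if $x \in B$, openness of $B$ at $x$ combined with density yields an element of $B$ in $[\alpha,x)$. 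Theorem \ref{MAINTHM} then forces $S = [\alpha,\beta]$, whence $\beta \in A$, contradicting $\beta \in B$.

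For (ii) $\implies$ (i) I argue two contrapositives. If $X$ is not dense, choose $a < b$ with $(a,b) = \varnothing$; then $(-\infty,b)$ and $(a,\infty)$ are nonempty open sets that partition $X$ (they coincide with $(-\infty,a]$ and $[b,\infty)$), disconnecting $X$. If $X$ is not Dedekind complete, choose nonempty $T \subset X$ bounded above with no supremum, let $U$ be the set of upper bounds of $T$, and put $L := X \setminus U$. Then $U$ is nonempty by hypothesis, $T \subset L$ (since $T$ has no maximum, else $\max T = \sup T$), and both sets are open: $U$ has no minimum (else that minimum would be $\sup T$), so each $u \in U$ admits some $u' \in U$ with $u' < u$, whence $(u',\infty) \subset U$; and for each $l \in L$ some $t \in T$ satisfies $t > l$, making $(-\infty,t) \subset L$ an open neighborhood of $l$. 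This yields a nontrivial clopen partition of $X$.

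The main obstacle is the (IS3) step in the forward direction: one must extract a witness inside $[\alpha,x)$ from an open neighborhood of $x$ in $B$, which requires care at the left endpoint. The key observation is that any basic open interval around $x$ contained in $B$ cannot contain $\alpha$ (since $\alpha \in A$ and $A \cap B = \varnothing$); thus the interval is truncated below at some point $\geq \alpha$, and density within that truncation supplies the required witness. One also has to track the three forms of basic open set in the order topology -- $(p,q)$, $[\Bottom,q)$, and $(p,\Top]$ -- but the analogous argument handles each case.
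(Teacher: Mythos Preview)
Your proof is correct and follows essentially the same strategy as the paper: for (i) $\implies$ (ii) you pass to a closed subinterval (you use $[\alpha,\beta]$ where the paper uses the two rays $[a,\infty)$ and $(-\infty,a]$), invoke Lemma~\ref{LEMMA1}, and apply Theorem~\ref{MAINTHM} to show a clopen set is inductive; for (ii) $\implies$ (i) you exhibit the same separation from a density gap and a dual (supremum-based) version of the paper's separation from a failure of completeness. Your variant for the completeness half is in fact slightly cleaner, since it avoids the appeal to density that the paper makes when showing the complement of $U$ is open.
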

\begin{proof}
(i) $\implies$ (ii): Step 1: We suppose $0 \in X$.  Since $X$ is dense, a subset $S \subset X$ 
which contains $0$ and is both open and closed in the order topology is inductive.  Since $X$ is Dedekind complete, by Theorem \ref{MAINTHM}, $S = X$.  This shows $X$ is connected!  \\
Step 2: We may assume $X \neq \varnothing$ and 
choose $a \in X$.  By Lemma \ref{LEMMA1}, Step 1 applies to show $[a,\infty)$ connected.  A similar downward induction argument shows $(-\infty,a]$ is connected.  Since $X = (-\infty,a] \cup [a,\infty)$ and $(-\infty,a] \cap 
[a,\infty) \neq \varnothing$, $X$ is connected. \\
(ii) $\implies$ (i): If the order is 
not dense, there are $a < b$ in $X$ with $[a,b] = \{a,b\}$, so $A = (-\infty,a], \ B = [b,\infty)$ is a separation of $X$.  Suppose we have $S \subset X$, nonempty, bounded below by $a$ and with no infimum.  Let $L$ be the set of lower bounds for $S$, and put $U = \bigcup_{b \in L} 
(-\infty,b)$, so $U$ is open and $U < S$.  We have $a \neq \inf(S)$, so $a \in U$, and thus $U \neq \varnothing$.  If $x \not \in U$, then 
$x \geq L$ and, indeed, since $L$ has no top element, $x > L$, so there exists $s \in S$ such that $s < x$.  Since the order is dense there is $y$ with $s < y < x$, and then the entire open set $(y,\infty)$ lies in the complement of $U$.  Thus $U$ is also closed.  Since $X$ is connected, $U = X$, contradicting $U < S$. 
\end{proof}

\begin{cor}
\label{CONNECTEDCONVERSE}
Let $(F,+,\cdot,<)$ be an ordered field.  The following are equivalent: \\
(i) $F$ is Dedekind complete -- i.e., $F \cong \R$.  \\
(ii) Every closed interval $[a,b]$ of $F$ is connected in the order topology.
\end{cor}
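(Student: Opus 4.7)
The plan is to derive this essentially as a direct consequence of Theorem \ref{THM1}, bridging ``ordered field'' and ``linearly ordered set'' hypotheses. The key observation is that an ordered field is automatically dense (between $a < b$ lies $\tfrac{a+b}{2}$), so the ``dense'' half of Theorem \ref{THM1} is free, and the real content of the corollary concerns Dedekind completeness. A second routine point worth noting explicitly is that on a closed interval $[a,b]$ of a linearly ordered set, the order topology of $[a,b]$ and the subspace topology from the order topology on the ambient set agree, so there is no ambiguity in what ``connected'' means.

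For (i) $\implies$ (ii), assume $F = \R$. Given $a < b$ in $F$, the interval $[a,b]$ is closed in $\R$, so by Lemma \ref{LEMMA1} the ordered set $([a,b], \leq)$ is Dedekind complete; it is also dense (midpoints lie in $[a,b]$). Theorem \ref{THM1} then gives connectedness in the order topology.

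For (ii) $\implies$ (i), the main step is to promote ``every closed interval of $F$ is Dedekind complete'' to ``$F$ itself is Dedekind complete''; this is the part I expect to require the most care. Let $T \subset F$ be nonempty and bounded above by some $b \in F$; fix any $a \in T$. Applying the hypothesis, $[a,b]$ is connected, hence by Theorem \ref{THM1} it is (dense and) Dedekind complete as a linearly ordered set. The set $T \cap [a,b]$ is nonempty (it contains $a$) and bounded above by $b$, so it admits a supremum $s \in [a,b]$. I will then verify that $s = \sup T$ in $F$: any $t \in T$ satisfies either $t \leq a \leq s$ or $t \in T \cap [a,b]$ (using $t \leq b$), hence $t \leq s$; and any $s' < s$ in $F$ is either $< a \leq$ some element of $T$, or $a \leq s' < s$ and hence fails to be an upper bound for $T \cap [a,b]$ in $[a,b]$. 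Thus $F$ is Dedekind complete, and since $\R$ is the unique Dedekind complete ordered field, $F \cong \R$.

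The only subtlety is the reduction of an arbitrary bounded nonempty subset of $F$ to a subset of some $[a,b]$ via intersection; everything else is a direct invocation of Theorem \ref{THM1}, Lemma \ref{LEMMA1}, and the characterization of $\R$ underlying Corollary \ref{INDCOMPTHM}.
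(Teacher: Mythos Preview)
Your argument is correct and is exactly the intended route: the paper states this result as an immediate corollary of Theorem~\ref{THM1} and gives no separate proof, so you have simply (and correctly) supplied the details the paper leaves implicit---namely, that an ordered field is automatically dense, that Lemma~\ref{LEMMA1} passes Dedekind completeness to $[a,b]$, and that Dedekind completeness of every $[a,b]$ lifts back to $F$ via the intersection trick $T \mapsto T \cap [a,b]$. The only thing I would add is that your parenthetical remark about the order topology on $[a,b]$ coinciding with the subspace topology (because $[a,b]$ is convex in $F$) is worth keeping, since without it the invocation of Theorem~\ref{THM1} is slightly ambiguous.
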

\noindent
Now we turn to compactness.

\begin{thm}(Heine-Borel)
\label{COMPACTTHM}
\label{4.3}
\label{HEINEBOREL}
The interval $[a,b]$ is compact.
\end{thm}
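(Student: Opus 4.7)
The plan is to prove Heine-Borel by a direct application of Real Induction (Theorem \ref{REALINDTHM}), entirely parallel in spirit to the connectedness argument of Theorem \ref{CONNECTEDTHM}. Fix an open cover $\mathcal{U} = \{U_\alpha\}$ of $[a,b]$, and define
\[ S = \{x \in [a,b] \mid [a,x] \text{ is covered by finitely many } U_\alpha \in \mathcal{U}\}. \]
Showing $S = [a,b]$ immediately yields that $b \in S$, i.e., $[a,b]$ has a finite subcover.

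For (RI1), $a$ lies in some $U_\alpha$, so $\{U_\alpha\}$ is a finite cover of $[a,a] = \{a\}$. For (RI2), suppose $x \in S$ with $x < b$; then there exist $U_{\alpha_1}, \ldots, U_{\alpha_n}$ covering $[a,x]$, and $x$ belongs to one of them, say $U_{\alpha_i}$. Since $U_{\alpha_i}$ is open, there is $\delta > 0$ (with $x + \delta \leq b$) such that $[x, x+\delta] \subset U_{\alpha_i}$; hence $U_{\alpha_1}, \ldots, U_{\alpha_n}$ still covers $[a, x+\delta]$, showing $[x, x+\delta] \subset S$. For (RI3), suppose $[a,x) \subset S$ with $x > a$; pick $U_\alpha \ni x$. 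Openness gives $\delta > 0$ with $(x-\delta, x] \subset U_\alpha$. Any $y$ with $\max(a, x-\delta) < y < x$ lies in $S$, so $[a,y]$ has a finite subcover $U_{\alpha_1}, \ldots, U_{\alpha_n}$; adjoining $U_\alpha$ produces a finite subcover of $[a,y] \cup (x-\delta, x] = [a,x]$, so $x \in S$.

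There is essentially no main obstacle here: each step uses only the defining openness of the $U_\alpha$ and the elementary fact that a finite subcover of a sub-interval extends to a finite subcover of a slightly larger one by adjoining (or reusing) a single open set. The only small subtleties to watch are (a) keeping the enlargement in (RI2) inside $[a,b]$, and (b) in (RI3), choosing $y$ close enough to $x$ that $[y,x]$ already lies inside the single set $U_\alpha$. As with Theorem \ref{CONNECTEDTHM}, the proof is strikingly short because Real Induction packages exactly the limit-of-a-left-segment reasoning that standard proofs of Heine-Borel carry out by hand via $\sup$-arguments.
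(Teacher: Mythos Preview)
Your proof is correct and is essentially identical to the paper's own argument: the same inductive set $S$, the same verification of (RI1)--(RI3), with (RI2) using that $x$ lies in one of the finitely many chosen $U_{\alpha_i}$ and (RI3) adjoining a single open set containing $x$ to a finite subcover of an initial segment. Your write-up is slightly more careful about the boundary cases (keeping $x+\delta\le b$, choosing $y>\max(a,x-\delta)$), but the method is the same.
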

\begin{proof} For an open covering $\mathcal{U} = \{U_i\}_{i \in I}$ of $[a,b]$, let \[S = \{x \in [a,b] \ | \ \mathcal{U} \cap [a,x] \text{ has a finite subcovering} \}. \]  We prove $S = [a,b]$ by Real Induction. (RI1) is clear.  (RI2): If $U_1,\ldots, U_n$ covers $[a,x]$, then some $U_i$ contains $[x,x+\delta]$ for some $\delta > 0$. (RI3): if $[a,x) \subset S$, let $i_x \in I$ be such that $x \in U_{i_x}$, and let $\delta > 0$ be such that $[x-\delta,x] \in U_{i_x}$.  Since 
$x-\delta \in S$, there is a finite $J \subset I$ with $\bigcup_{i \in J} U_i \supset [a,x-\delta]$, so $\{U_i\}_{i \in J} \cup U_{i_x}$ covers $[a,x]$. 
\end{proof}
\noindent
Remark: The Extreme Value Theorem implies the compactness of 
$[a,b]$, a claim we leave to the reader.  (But again, we prefer the 
proof of Theorem \ref{4.3}).
\\ \\
These ideas can be used to characterize compactness in linearly ordered topological spaces.  We say a linearly ordered set $(X,\leq)$ is \textbf{complete} 
if every subset has a least upper bound; equivalently, if every subset has an 
greatest lower bound.  $X$ is complete iff it is Dedekind complete and has 
top and bottom elements.  

\begin{thm}
\label{THM2}
For a nonempty linearly ordered set $X$, TFAE: \\
(i) $X$ is complete. \\
(ii) $X$ is compact in the order topology.   
\end{thm}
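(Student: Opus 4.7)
The plan is to prove both implications, with (i) $\implies$ (ii) being a direct application of the ordered induction principle Theorem \ref{MTIOSTHM}, modeled on the Heine--Borel proof (Theorem \ref{HEINEBOREL}), and (ii) $\implies$ (i) proceeding by exhibiting explicit open covers admitting no finite subcover whenever any of the three components of completeness --- existence of $\Top$, existence of $\Bottom$, or Dedekind completeness --- fails.

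For (i) $\implies$ (ii), let $\mathcal{U} = \{U_i\}_{i \in I}$ be an open cover of $X$ and set
\[ S = \{x \in X : (-\infty,x] \text{ has a finite subcover from } \mathcal{U}\}. \]
I would verify that $S$ satisfies (IS1), (IS2), (IS3), so by Theorem \ref{MTIOSTHM} we have $S = X$; since $X$ has a top element, $\Top \in S$ then exhibits a finite subcover of all of $X$. For (IS1) take $a = \Bottom$ together with any $U_i \ni \Bottom$. For (IS2), given $x \in S$ with $x \neq \Top$ and a finite subcover $U_{j_1},\ldots,U_{j_k}$ of $(-\infty,x]$ with $x \in U_{j_1}$: if $x$ has an immediate successor $x^+$, take $y = x^+$ and adjoin any $U_i \ni x^+$ to the subcover; otherwise a basic order-topology neighborhood of $x$ inside $U_{j_1}$ supplies $d > x$ with $[x,d) \subset U_{j_1}$, and since $x$ has no immediate successor any $y \in (x,d)$ satisfies $[x,y] \subset U_{j_1}$, hence $[x,y] \subset S$. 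For (IS3), given $(-\infty,x) \subset S$ with $x \neq \Bottom$, pick $U_i \ni x$; a basic neighborhood of $x$ inside $U_i$ either contains $(-\infty,x]$ outright (done), or contains $(c,x]$ for some $c < x$, whereupon $c \in S$ supplies a finite subcover of $(-\infty,c]$ which together with $U_i$ covers $(-\infty,x]$.

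For (ii) $\implies$ (i), suppose $X$ is compact. If $X$ had no top element, then $\{(-\infty,x) : x \in X\}$ would be an open cover whose finite subcovers have union $(-\infty,\max_i x_i)$, missing their own maximum --- contradicting compactness; a symmetric argument yields $\Bottom$. For Dedekind completeness, suppose for contradiction that $A \subset X$ is nonempty and bounded above but admits no least upper bound, and let $U$ denote the set of upper bounds of $A$, which by assumption has no minimum element. Then
\[ \{[\Bottom,a) : a \in A\} \cup \{(u,\Top] : u \in U\} \]
is an open cover of $X$: every $x \notin U$ lies in some $[\Bottom,a)$ with $a \in A$ and $a > x$, while every $x \in U$ lies in some $(u,\Top]$ with $u \in U$ and $u < x$, furnished by the absence of a minimum in $U$. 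For any finite subcover, letting $a^* = \max_i a_i$ (over the finitely many $A$-indices used), we see that $a^* \in A$ is not covered by any $[\Bottom,a_i)$ (since $a^* \geq a_i$) nor by any $(u_j,\Top]$ (since each $u_j \in U$ satisfies $u_j \geq a^*$), a contradiction.

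The principal obstacle is the case-by-case bookkeeping in (IS2) and (IS3), where one must juggle the three shapes of basic open intervals --- $(c,d)$, $[\Bottom,d)$, and $(c,\Top]$ --- against the possible presence or absence of immediate successors or predecessors of $x$. The (ii) $\implies$ (i) direction, by contrast, is essentially just a matter of writing down the right cover and checking who is covered by what.
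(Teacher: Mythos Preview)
Your proof is correct. The (i) $\implies$ (ii) direction is essentially the paper's argument: both define $S$ as the set of $x$ for which $(-\infty,x]$ admits a finite subcover and verify inductivity, with your treatment of (IS2) and (IS3) being somewhat more careful about the immediate-successor and basic-neighborhood cases than the paper's rather terse version.

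The (ii) $\implies$ (i) direction, however, genuinely diverges. The paper argues via the finite intersection property: after obtaining $\Bottom$ and $\Top$, it takes an arbitrary $S$, passes to its closure, and intersects the closed intervals $[b,s]$ over all lower bounds $b$ and all $s \in S$; compactness of the closed set $\overline{S}$ produces a point in the full intersection, which is then $\inf S$. Your approach is instead contrapositive and cover-based: assuming a nonempty bounded-above $A$ with no supremum, you build the explicit open cover $\{[\Bottom,a):a\in A\}\cup\{(u,\Top]:u\in U\}$ and exhibit a point ($a^* = \max_i a_i$) missed by any finite subfamily. Your route is a bit more elementary --- no passage to closures, no invocation of FIP --- while the paper's is more constructive, actually locating the infimum rather than deriving a contradiction. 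One small point worth making explicit in your write-up: you should note that any finite subcover must use at least one set of the form $[\Bottom,a)$, since no $(u,\Top]$ can contain $\Bottom$; this guarantees that your $a^*$ is well-defined.
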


\begin{proof}
(i) $\implies$ (ii): Let $\mathcal{U} = \{U_i\}_{i \in I}$ be an open covering of $X$.  Let $S$ be the set of $x \in X$ such that the covering $\mathcal{U} \cap [0,x]$ of $[0,x]$ admits a finite subcovering.  $0 \in S$, so $S$ satisfies (IS1).  Suppose $U_1 \cap [0,x],\ldots, U_n \cap [0,x]$ covers $[0,x]$.  If there exists $y \in X$ such that $[x,y] = \{x,y\}$, then adding to the covering any element $U_y$ containing $y$ gives a finite covering of $[0,y]$.  Otherwise
some $U_i$ contains $x$ and hence also $[x,y]$ for some $y > x$.  So $S$ satisfies (IS2).  If we have a finite covering of $[0,x)$, then we get a finite covering of $[0,x]$ just by adding in any element $U_x$ which contains $x$, so $S$ satisfies (IS3).  Thus $S$ 
is an inductive subset of the Dedekind complete ordered set $X$ and $S = X$.  
So $1 \in S$ and the covering has a finite subcovering.  \\
(ii) $\implies$ (i): For each $x \in X$ there is a bounded open interval $I_x$ containing $x$.  If $X$ is compact, $\{I_x\}_{x \in X}$ has a finite subcovering, so $X$ is bounded, 
i.e., has $\Bottom$ and $\Top$.  Let $S \subset X$.  Since $\inf \varnothing = \Top$, we may assume $S \neq \varnothing$.  Since $S$ has an infimum iff $\overline{S}$ does, we may assume $S$ is closed 
and thus compact.  Let $L$ be the set of lower bounds for $S$.  For each $(b,s) \in L \times S$, consider the closed interval $C_{b,s} := [b,s]$.  For any finite subset $\{(b_1,s_1),\ldots,(b_n,s_n)\}$ of $L \times S$, $\bigcap_{i=1}^N [b_i,s_i] \supset [\max b_i, \min s_i] \neq \varnothing$.  
Since $S$ is compact there is  $y \in \bigcap_{L \times S} [b,s]$ and 
then $y = \inf S$.
\end{proof}  
\begin{cor}(Generalized Heine-Borel)
\label{HEINEBOREL}
a) For a linearly ordered set $X$, TFAE: \\
(i) $X$ is Dedekind complete. \\
(ii) A subset $S$ of $X$ is compact in the order topology iff it is closed and bounded.  \\
b) For an ordered field $F$, the following are equivalent: \\
(i) Every closed, bounded interval $[a,b] \subset F$ is compact.  \\
(ii) $F = \R$.
\end{cor}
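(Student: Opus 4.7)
The plan is to derive both parts from Theorem \ref{THM2} together with Lemma \ref{LEMMA1}, which let us move back and forth between compactness in a Dedekind complete ordered space and completeness of closed sub-intervals.

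For part a), (i) $\implies$ (ii): if $S \subset X$ is compact, then, since linearly ordered spaces are Hausdorff in the order topology, $S$ is closed; covering $S$ by bounded open intervals and extracting a finite subcover shows $S$ is bounded. Conversely, suppose $S$ is closed and bounded, so $S \subset [a,b]$ for some $a \leq b$ in $X$. By Lemma \ref{LEMMA1}, $[a,b]$ is Dedekind complete, and it obviously has $\Top$ and $\Bottom$, hence is complete. After checking the standard but easily-overlooked fact that for a closed interval the subspace topology from $X$ agrees with the intrinsic order topology on $[a,b]$, Theorem \ref{THM2} gives that $[a,b]$ is compact. Since $S$ is closed in $X$, it is closed in $[a,b]$, hence compact.

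For part a), (ii) $\implies$ (i): let $T \subset X$ be nonempty and bounded below, with $b$ a lower bound and $t_0 \in T$. The closed interval $[b,t_0]$ is closed and bounded, hence compact by hypothesis. Again its subspace topology coincides with its intrinsic order topology, so Theorem \ref{THM2} gives that $[b,t_0]$ is complete; the nonempty subset $T \cap [b,t_0]$ therefore has an infimum $\alpha$ in $[b,t_0]$, and one checks that elements of $T$ above $t_0$ do not affect the infimum, so $\alpha = \inf T$ in $X$.

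For part b), (ii) $\implies$ (i) is immediate from part a) applied to $\R$, since $\R$ is Dedekind complete and closed bounded intervals are closed and bounded subsets. For (i) $\implies$ (ii), fix any $a < b$ in $F$; the hypothesis and Theorem \ref{THM2} (applied to $[a,b]$, whose subspace and order topologies agree) show $[a,b]$ is complete. Thus every nonempty bounded subset of $F$, being contained in some such $[a,b]$, has a supremum in $F$, so $F$ is Dedekind complete and Corollary \ref{INDCOMPTHM} gives $F = \R$.

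The main obstacle is not any single step but rather keeping straight two small-but-essential facts that are easy to gloss over: first, that the subspace topology on a closed interval $[a,b] \subset X$ coincides with the order topology the interval carries as a linearly ordered set in its own right, so that Theorem \ref{THM2} can legitimately be invoked; and second, that an infimum extracted inside a compact sub-interval really is the infimum in the ambient space $X$. Both verifications are routine, but without them the appeal to Theorem \ref{THM2} on a subset rather than on $X$ itself would be unjustified.
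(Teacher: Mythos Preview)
Your argument is correct and follows the paper's approach: both directions of part a) are derived from Theorem~\ref{THM2} together with Lemma~\ref{LEMMA1}, and part b) is then immediate. The one noteworthy difference is that for (i)~$\Rightarrow$~(ii) the paper applies Lemma~\ref{LEMMA1} and Theorem~\ref{THM2} directly to the closed bounded set $S$ itself, whereas you first pass to an enclosing interval $[a,b]$ and then use that $S$ is closed in a compact space; your detour has the virtue of making the subspace-versus-order-topology issue land on a convex subset, where it is transparently harmless.
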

\begin{proof}
a) (i) $\implies$ (ii): A compact subset of any ordered space is closed and bounded.  Conversely, if $X$ is Dedekind complete and $S \subset X$ is 
closed and bounded, then by Lemma \ref{LEMMA1}, $S$ is complete and then by 
Theorem \ref{THM2}, $S$ is compact.  \\
(ii) $\implies$ (i): If $S \subset X$ is nonempty and bounded above, let 
$a \in S$.  Then $S' = S \cap [a,\infty)$ is bounded, so $\overline{S'}$ 
is compact and thus $\overline{S'}$ is complete by Theorem \ref{THM2}.  The least upper bound of $\overline{S'}$ is also the least upper bound of $S$.  \\
b) This follows immediately.  
\end{proof}
\noindent

%\begin{thm}(Heine-Borel)
%\label{COMPACTTHM}
%\label{4.3}
%The interval $[a,b]$ is compact.
%\end{thm}
%\begin{proof} For an open covering $\mathcal{U} = \{U_i\}_{i \in I}$ of %$[a,b]$, let \[S = \{x \in [a,b] \ | \ \mathcal{U} \cap [a,x] \text{ has a %finite subcovering} \}. \]  We prove $S = [a,b]$ by Real Induction. (RI1) is %clear.  (RI2): If $U_1,\ldots, U_n$ covers $[a,x]$, then some $U_i$ contains %$[x,x+\delta]$ for some $\delta > 0$. (RI3): if $[a,x) \subset S$, let $i_x \in %I$ be such that $x \in U_{i_x}$, and let $\delta > 0$ be such that %$[x-\delta,x] \in U_{i_x}$.  Since 
%$x-\delta \in S$, there is a finite $J \subset I$ with $\bigcup_{i \in J} U_i %\supset [a,x-\delta]$, so $\{U_i\}_{i \in J} \cup U_{i_x}$ covers $[a,x]$. 
%\end{proof}

\section{Still More Real Induction}

\subsection{Around the Mean Value Theorem}
\textbf{} \\ \\ \noindent
For an interval $I \subset \R$, let us denote by $I^{\circ}$ the interior of $I$.  

%Recall that the following is one of the cornerstones of differential calculus.
%
\begin{thm}(Mean Value Theorem)
\label{MVT}
Let $f: I \ra \R$ be continuous and differentiable on $I^{\circ}$.  For any 
$a < b \in I$, there exists $c \in I^{\circ}$ such that $\frac{f(b)-f(a)}{b-a} = f'(c)$.
\end{thm}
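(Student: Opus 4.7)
The plan is to reduce the Mean Value Theorem to Rolle's theorem by the standard linear correction, and then to prove Rolle's theorem using the Extreme Value Theorem (Theorem \ref{EVT}). In this paper EVT was proved by real induction, so real induction powers the argument, but only through EVT; I do not see a clean way to apply real induction directly to the conclusion of the MVT.

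First I would introduce the auxiliary function
\[ g(x) = f(x) - f(a) - \frac{f(b)-f(a)}{b-a}(x-a). \]
Then $g$ is continuous on $[a,b]$, differentiable on $(a,b)$, satisfies $g(a) = g(b) = 0$, and has $g'(x) = f'(x) - \frac{f(b)-f(a)}{b-a}$. So it suffices to find $c \in (a,b)$ with $g'(c) = 0$, i.e.\ to prove Rolle's theorem for $g$.

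Second I would apply Theorem \ref{EVT} to $g$ on $[a,b]$ to produce $x_m, x_M \in [a,b]$ where $g$ attains its minimum and maximum. If $x_m$ and $x_M$ both lie in $\{a,b\}$, then $g \equiv 0$ on $[a,b]$, so $g'(c) = 0$ for any $c \in (a,b)$. Otherwise, since $g(a) = g(b) = 0$, at least one of $x_m, x_M$ lies in the open interval $(a,b)$; call it $c$.

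Third I would apply Fermat's interior-extremum lemma at $c$: since $g$ is differentiable at $c$ and $c$ is an interior local extremum, comparing the signs of the one-sided difference quotients $\frac{g(c+h)-g(c)}{h}$ for $h > 0$ and $h < 0$ forces $g'(c) = 0$. This is a routine one-sided limit argument.

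The main obstacle here is conceptual rather than technical: the conclusion of MVT is an existence statement about a single point $c$, while real induction is naturally suited to \emph{universal} statements indexed by $x \in [a,b]$. The ``real-induction content'' of MVT is really the guaranteed existence of extrema on $[a,b]$, and that content has already been spent in proving EVT. So the proof above is not a new real induction proof but a deduction that routes through one. A possible alternate line of attack would be to prove the monotonicity theorem ($f' \geq 0$ on $(a,b)$ implies $f$ nondecreasing on $[a,b]$) directly by real induction --- which does go through cleanly --- and then try to derive MVT from a two-sided squeeze using $\inf f'$ and $\sup f'$; but without EVT or Rolle's one only reaches the inequality $\inf f' \leq \tfrac{f(b)-f(a)}{b-a} \leq \sup f'$, not the attained equality that MVT asserts, so EVT seems unavoidable.
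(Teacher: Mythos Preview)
Your proof is correct --- it is the classical reduction to Rolle's theorem via the linear correction, with Rolle obtained from the Extreme Value Theorem plus Fermat's lemma. There is nothing to fault in the argument itself.

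However, there is no comparison to be made with the paper's proof, because the paper does not prove Theorem~\ref{MVT}. The author states the Mean Value Theorem, then explicitly remarks that ``the first cannot --- so far as I know! --- be proved directly by Real Induction, while the others can,'' and proceeds to give a Real Induction proof only of Corollary~\ref{MVCOR1} (the Mean Value Inequality). Finding a direct Real Induction proof of MVT, or explaining why none exists, is then posed as an open problem.

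Your conceptual remarks are therefore very much in the spirit of the paper. Your observation that MVT is an existence statement about a single point, and hence ill-matched to the universal-over-$[a,b]$ shape of Real Induction, is exactly the distinction the author is gesturing at. Your ``alternate line of attack'' --- proving the monotonicity/Mean Value Inequality by Real Induction and then noting that this yields only $\inf f' \leq \frac{f(b)-f(a)}{b-a} \leq \sup f'$ rather than attained equality --- is essentially what the paper does with Corollary~\ref{MVCOR1}, and your diagnosis that EVT (or something like it) seems unavoidable for the full MVT is a reasonable partial answer to the open problem the author poses.
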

\noindent
%Let us briefly recall the standard proof: it is no loss of generality 
%to assume that $I = [a,b]$.  Moreover, by subtracting an appropriate linear 
%function, we reduce to the case $f(a) = f(b) = 0$.  Then the desired conclusion %certainly holds if $f$ is identically zero; otherwise, applying the Extreme %Value Theorem, $f$ assumes either its minimum or maximum value at an interior %point $c$, at which $f'(c) = 0$.  \\ \indent
Many authors -- e.g. \cite{Bers67}, \cite{Cohen67}, \cite{Tucker97}, \cite{Koliha09} -- have advocated replacing Theorem \ref{MVT} with an alternative that is (somehow) more appealing / intuitive to calculus students, 
and especially with one of the following results.
\begin{cor}(Mean Value Inequality)
\label{MVCOR1}
Let $f: I \ra \R$ be differentiable.  Suppose that there exists $M \in \R$ 
such that for all $x \in I$, $f'(x) \geq M$.  Then for all $x < y \in \R$, $f(y) - f(x) \geq M (y-x)$.
\end{cor}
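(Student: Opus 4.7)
The plan is to prove Corollary~\ref{MVCOR1} by Real Induction (Theorem~\ref{REALINDTHM}), in the idiom of the rest of the paper. However, a naive attempt to induct on $S = \{t \in [x,y] : f(t)-f(x) \ge M(t-x)\}$ stalls at (RI2): the pointwise hypothesis $f'(t) \ge M$ only says the one-sided difference quotients $\frac{f(s)-f(t)}{s-t}$ \emph{tend} to something $\ge M$ as $s \to t^+$, not that they are themselves $\ge M$ on any right neighborhood of $t$. I get around this by giving myself an $\epsilon$ of slack.

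Fix $x < y$ in $I$ and $\epsilon > 0$, and set
\[ S_\epsilon = \{\, t \in [x,y] : f(t) - f(x) \ge (M-\epsilon)(t-x) \,\}. \]
For (RI1), $x \in S_\epsilon$ trivially. For (RI2), suppose $t \in S_\epsilon$ with $t < y$. Since $f'(t) \ge M > M - \epsilon$, the definition of the derivative furnishes $\delta > 0$ with $t + \delta \le y$ such that $\frac{f(s)-f(t)}{s-t} > M-\epsilon$ for every $s \in (t, t+\delta]$; adding this to the inductive hypothesis at $t$ yields $f(s) - f(x) \ge (M-\epsilon)(s-x)$ for all $s \in [t, t+\delta]$, so $[t,t+\delta] \subset S_\epsilon$. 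For (RI3), if $t \in (x,y]$ satisfies $[x,t) \subset S_\epsilon$, then continuity of $f$ at $t$ (a consequence of differentiability) lets me pass to the limit $s \to t^-$ in $f(s)-f(x) \ge (M-\epsilon)(s-x)$ to conclude $t \in S_\epsilon$. Theorem~\ref{REALINDTHM} then gives $S_\epsilon = [x,y]$, so $f(y) - f(x) \ge (M-\epsilon)(y-x)$; letting $\epsilon \to 0^+$ finishes.

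The main obstacle is precisely the one flagged above: the pointwise derivative bound does not by itself control difference quotients on any interval, which is what (RI2) demands. The $\epsilon$-buffer is exactly enough: replacing $M$ by $M - \epsilon$ converts $f'(t) \ge M$ into the strict inequality $f'(t) > M - \epsilon$, and strict inequalities at a limit \emph{do} persist on a small one-sided neighborhood. Once this is noticed, each of (RI1), (RI2), (RI3) is routine, and passing $\epsilon \to 0^+$ at the end is harmless because the conclusion $f(y) - f(x) \ge (M-\epsilon)(y-x)$ is closed in $\epsilon$.
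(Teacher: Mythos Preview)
Your proof is correct and follows essentially the same route as the paper: fix $\epsilon>0$, apply Real Induction to $S_\epsilon=\{t\in[x,y]:f(t)-f(x)\ge(M-\epsilon)(t-x)\}$, then let $\epsilon\to 0^+$. The only cosmetic difference is in (RI3): the paper reuses the derivative bound from the left to get $f(t)-f(y)\ge(M-\epsilon)(t-y)$ for some $y<t$ and adds it to the inductive hypothesis at $y$, whereas you simply invoke continuity of $f$ to pass to the limit in the inductive inequality---your version is arguably cleaner, but the two arguments are interchangeable.
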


\begin{cor}(Weakly Increasing Function Theorem)
\label{MVCOR2}
Let $f: I \ra \R$ be differentiable, and suppose that for all $x \in I$, $f'(x) \geq 0$.  Then $f$ 
is \emph{weakly increasing} on $I$: for all $x,y \in I$, $x \leq y \implies 
f(x) \leq f(y)$.  
\end{cor}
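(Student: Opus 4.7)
The plan is to apply Real Induction directly, using a standard $\epsilon$-slack so that the strict inequality $f'(t) + \epsilon > 0$ provides the push needed at (RI2). It clearly suffices to show that whenever $a < b$ lie in $I$, we have $f(a) \leq f(b)$. So fix such $a < b$, fix $\epsilon > 0$, and define
\[ S(\epsilon) = \{ t \in [a,b] \ | \ f(t) - f(a) \geq -\epsilon(t-a) \}. \]
I will show $S(\epsilon) = [a,b]$ by Real Induction, then let $\epsilon \to 0^+$.

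For (RI1), $a \in S(\epsilon)$ since $f(a) - f(a) = 0$. For (RI2), suppose $t \in S(\epsilon)$ with $t < b$. Since $f'(t) \geq 0 > -\epsilon$, the definition of the derivative supplies a $\delta > 0$ such that for all $s \in (t, t+\delta]$, $\frac{f(s)-f(t)}{s-t} > -\epsilon$, i.e.\ $f(s) - f(t) \geq -\epsilon(s-t)$; adding this to the hypothesis $f(t) - f(a) \geq -\epsilon(t-a)$ gives $f(s) - f(a) \geq -\epsilon(s-a)$, so $[t, t+\delta] \subset S(\epsilon)$. For (RI3), suppose $t \in (a,b]$ and $[a,t) \subset S(\epsilon)$. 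Then $f(s) - f(a) \geq -\epsilon(s-a)$ for all $s \in [a,t)$; letting $s \to t^-$ and using that $f$ is continuous at $t$ (a consequence of differentiability) yields $f(t) - f(a) \geq -\epsilon(t-a)$, so $t \in S(\epsilon)$.

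By Theorem \ref{REALINDTHM}, $S(\epsilon) = [a,b]$; in particular $f(b) - f(a) \geq -\epsilon(b-a)$. Since $\epsilon > 0$ was arbitrary, $f(b) \geq f(a)$.

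There is no real obstacle here: the only subtlety is that $f'(t) \geq 0$ alone is not enough to push from $t$ to $t + \delta$ (the hypothesis is infinitesimal, whereas the conclusion $[t,t+\delta] \subset S$ is about a positive-length interval). The $\epsilon$-slack in the definition of $S(\epsilon)$ upgrades the nonstrict inequality $f'(t) \geq 0$ to the strict inequality $f'(t) > -\epsilon$, which then does transfer to a genuine neighborhood of $t$ via the $\epsilon$-$\delta$ definition of the derivative — exactly the standard trick underlying this kind of differentiation argument. Note that this same method, applied to $f(t) - Mt$ in place of $f$, gives Corollary \ref{MVCOR1} with no extra work.
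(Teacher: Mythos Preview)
Your proof is correct and is essentially the paper's own argument: the paper proves Corollary~\ref{MVCOR1} by Real Induction with the same $\epsilon$-slack set $S_\epsilon = \{x \in [a,b] : f(x)-f(a) \geq (M-\epsilon)(x-a)\}$, and your $S(\epsilon)$ is just the case $M=0$. The one small difference is that for (RI3) the paper reuses the derivative at $x$ on a left-hand interval $[x-\delta,x]$, whereas you simply pass to the limit $s \to t^-$ using continuity --- your version is a bit cleaner, since $S(\epsilon)$ is defined by a closed condition.
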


\begin{cor}(Increasing Function Theorem)
\label{MVCOR3}
Let $f: I \ra \R$ be differentiable, and suppose that for all $x \in I$, 
$f'(x) > 0$.  Then $f$ is \emph{increasing} on $I$: for all $x,y \in I$, 
$x < y \implies f(x) < f(y)$.
\end{cor}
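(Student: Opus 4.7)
The plan is to apply Real Induction directly, in the spirit of the earlier calculus theorems in this paper. Fix $a < b$ in $I$; it suffices to show $f(b) > f(a)$. Let
\[ S = \{y \in [a,b] : y = a \text{ or } f(y) > f(a)\}, \]
and aim to show $S = [a,b]$ via (RI1)--(RI3).

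(RI1) holds trivially. For (RI2), let $y \in S$ with $y < b$. Since the Newton quotient $\frac{f(z)-f(y)}{z-y}$ tends to $f'(y) > 0$ as $z \to y$, there is $\delta > 0$ with $f(z) > f(y)$ for every $z \in (y, y+\delta]$. If $y = a$ this immediately gives $f(z) > f(a)$ on $(a, a+\delta]$; if $y > a$ then $f(y) > f(a)$ already, so $f(z) > f(y) > f(a)$ on $(y, y+\delta]$. Either way, $[y, y+\delta] \subset S$.

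For (RI3), suppose $y > a$ and $[a,y) \subset S$, so $f(z) > f(a)$ for every $z \in (a,y)$. Continuity of $f$ at $y$ yields only the weak inequality $f(y) \geq f(a)$, which is not enough. The remedy is to invoke $f'(y) > 0$ a second time, now from the left: there is $\delta > 0$ such that $\frac{f(z)-f(y)}{z-y} > 0$ for $z \in [y-\delta, y)$, and since $z - y < 0$ this forces $f(z) < f(y)$. Picking any $z \in (a,y) \cap [y-\delta,y)$, which lies in $S$, we obtain $f(a) < f(z) < f(y)$, so $y \in S$. With $S = [a,b]$ established, $b \in S$ gives $f(b) > f(a)$, and since $a < b$ were arbitrary $f$ is increasing on $I$.

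I expect (RI3) to be the main obstacle: the single appeal to continuity that works in many of this paper's real-induction proofs delivers only $f(y) \geq f(a)$, and promoting this to the strict inequality required for membership in $S$ forces a second, left-sided use of $f'(y) > 0$. This is precisely the step that fails for the weakly-increasing analogue Corollary \ref{MVCOR2}, where $f'(y) \geq 0$ is too weak to pry the inequality apart; that corollary must therefore be handled by a different route (e.g.\ via Corollary \ref{MVCOR1}).
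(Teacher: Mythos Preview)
Your proof is correct, and it takes a genuinely different route from the paper. The paper does not prove Corollary~\ref{MVCOR3} directly; it establishes the Mean Value Inequality (Corollary~\ref{MVCOR1}) by Real Induction using the $\epsilon$-perturbation set $S_\epsilon = \{x : f(x)-f(a) \geq (M-\epsilon)(x-a)\}$ and then invokes the claimed equivalence of Corollaries~\ref{MVCOR1}--\ref{MVCOR4}. Your argument instead runs Real Induction directly on the set $\{y : y=a \text{ or } f(y) > f(a)\}$ and dispenses with the $\epsilon$-trick entirely by using the strict positivity of $f'$ twice: once from the right in (RI2), and once from the left in (RI3) to upgrade the limit inequality $f(y)\geq f(a)$ to strict. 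This is a cleaner path to the Increasing Function Theorem specifically, and your closing remark is exactly right: the left-sided use of $f'(y)>0$ is what makes (RI3) go through, and it is precisely the step that collapses when the hypothesis is weakened to $f'\geq 0$, which is why Corollary~\ref{MVCOR2} genuinely requires the $\epsilon$-perturbation device of Corollary~\ref{MVCOR1}. The paper's route, in turn, buys you the full Mean Value Inequality and hence all four corollaries at once.
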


\begin{cor}(Racetrack Principle)
\label{MVCOR4}
Let $f,g: [a,b]  \ra \R$ be differentiable.  If $g'(x) \leq f'(x)$ for all $x \in [a,b]$, 
then $g(x) - g(a) \leq f(x) - f(a)$.  
\end{cor}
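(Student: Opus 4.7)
The plan is to prove the Racetrack Principle directly by Real Induction on $[a,b]$, bypassing the Mean Value Theorem entirely. (One could, alternatively, deduce it at once by applying Corollary \ref{MVCOR2} to the function $h = f-g$, but the direct inductive argument is instructive and costs nothing extra.) Set $h = f - g$; then $h$ is differentiable on $[a,b]$ with $h'(x) \geq 0$ everywhere, and the conclusion $g(x) - g(a) \leq f(x) - f(a)$ is equivalent to $h(x) \geq h(a)$ for all $x \in [a,b]$.

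The key trick, which I anticipate will be the main conceptual move, is that one cannot hope to push $S = \{x : h(x) \geq h(a)\}$ through Real Induction: the hypothesis $h'(x) \geq 0$ is not strong enough to force $h$ to remain weakly above $h(a)$ in an interval immediately to the right of $x$, so (RI2) can fail. The remedy is to introduce a small slack parameter. Fix $\epsilon > 0$ and define
\[ S_\epsilon = \{ x \in [a,b] \ | \ h(x) \geq h(a) - \epsilon(x-a) \}. \]
I claim $S_\epsilon$ is inductive, hence by Theorem \ref{REALINDTHM} equals $[a,b]$; letting $\epsilon \to 0^+$ then gives $h(x) \geq h(a)$ for every $x \in [a,b]$.

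For the verification, (RI1) is immediate since $h(a) = h(a) - 0$. For (RI2), suppose $x \in S_\epsilon$ with $x < b$. Since $h'(x) \geq 0 > -\epsilon$, the definition of the derivative produces $\delta > 0$ so that $\frac{h(y)-h(x)}{y-x} > -\epsilon$ for all $y \in (x, x+\delta]$. Rearranging and using $x \in S_\epsilon$ gives
\[ h(y) > h(x) - \epsilon(y-x) \geq h(a) - \epsilon(x-a) - \epsilon(y-x) = h(a) - \epsilon(y-a), \]
so $[x, x+\delta] \subset S_\epsilon$. For (RI3), if $[a,x) \subset S_\epsilon$, then continuity of $h$ at $x$ allows us to pass to the limit $y \to x^-$ in the inequality $h(y) \geq h(a) - \epsilon(y-a)$ to get $h(x) \geq h(a) - \epsilon(x-a)$.

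The only real obstacle is recognizing the need for the $\epsilon$-slack; once that idea is in place, the three inductive axioms correspond respectively to the trivial base case, the one-sided derivative estimate, and continuity at a left limit point, each of which is routine.
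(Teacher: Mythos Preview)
Your proof is correct and is essentially the paper's own argument: the paper proves Corollary~\ref{MVCOR1} by Real Induction using precisely this $\epsilon$-slack trick and then observes that Corollaries~\ref{MVCOR1}--\ref{MVCOR4} are all trivially equivalent, so your argument amounts to running the paper's proof with $M=0$ on the auxiliary function $h=f-g$. The one small variation is that in (RI3) you invoke continuity of $h$ to pass to the limit, whereas the paper re-uses the one-sided derivative estimate from the left; both work, and yours is arguably a touch cleaner.
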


\begin{cor}(Zero Velocity Theorem)
\label{MVCOR5}
Let $f: I \ra \R$ be differentiable, and suppose that for all $x \in I^{\circ}$, 
$f'(x) = 0$.  Then $f$ is constant.
\end{cor}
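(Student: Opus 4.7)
The plan is to prove by Real Induction that $f(a) = f(b)$ for every $a < b$ in $I^\circ$; since $I^\circ$ is dense in $I$ and $f$ is continuous (being differentiable), constancy on $I^\circ$ extends by continuity to all of $I$. Fix such $a < b$ and fix $\epsilon > 0$, and let
\[ S = S(\epsilon) = \{x \in [a,b] \ | \ |f(x) - f(a)| \leq \epsilon(x-a) \}. \]
Proving $S = [a,b]$ gives $|f(b) - f(a)| \leq \epsilon(b-a)$, and letting $\epsilon \to 0$ yields $f(a) = f(b)$.

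The three real-induction conditions go as follows. (RI1) is immediate. For (RI2), at any $x \in S$ with $x < b$, the hypothesis $f'(x) = 0$ at the interior point $x$ provides, for some $\delta > 0$, the local estimate $|f(y)-f(x)| \leq \epsilon(y-x)$ on $[x, x+\delta]$; combining this with $|f(x)-f(a)| \leq \epsilon(x-a)$ via the triangle inequality shows $[x, x+\delta] \subset S$. For (RI3), if $[a,x) \subset S$, then the inequality defining $S$ holds for all $y < x$ and passes through the limit $y \to x^-$ by continuity of $f$ at $x$.

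The only real design choice -- and the place requiring any thought -- is the form of $S$. One must use a \emph{non-strict} inequality (limits preserve only $\leq$, not $<$, which is essential for (RI3)), and one must let the tolerance $\epsilon(x-a)$ grow linearly with the distance from $a$ (a fixed constant tolerance leaves no ``budget'' in (RI2) to absorb any further local increment, however small). With this linear slack in place, the pointwise derivative condition supplies exactly the per-unit-length increment needed to extend $S$ rightward at each of its elements, and the proof glides through. This is the same trick that drives the paper's Real Induction proof of Darboux integrability.
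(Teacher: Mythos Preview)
Your proof is correct and is essentially the paper's Real Induction argument for the Mean Value Inequality (Corollary~\ref{MVCOR1}) specialized to $M=0$, which is exactly how the paper obtains the Zero Velocity Theorem. The only minor variations are that you fold the two one-sided estimates into one via absolute values and handle (RI3) by continuity rather than by invoking the derivative again; both are harmless.
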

\noindent
Corollaries \ref{MVCOR1}, \ref{MVCOR2}, \ref{MVCOR3} and \ref{MVCOR4} are 
equivalent (in the sense that each immediately implies all the others), are 
all implied by Theorem \ref{MVT}, and all imply Corollary \ref{MVCOR5}.  
\\ \indent I have no pedagogical misgivings about Theorem \ref{MVT}.  But there may be a mathematical distinction to be made between Theorem 
\ref{MVT} and Corollaries \ref{MVCOR1}, \ref{MVCOR2}, \ref{MVCOR3}, \ref{MVCOR4}: the first cannot -- so far as I know! -- be proved directly by Real Induction, while the others can.  We will be content to give a Real Induction proof of Corollary \ref{MVCOR1}.
\begin{proof}
Let $a < b \in I$.  Fix $\epsilon > 0$, and define 
\[ S_{\epsilon} = \{x \in [a,b] \ | \ f(x) -f(a) \geq (M-\epsilon)(x-a) \}. \]
We will show that $S_{\epsilon} = [a,b]$.  Then $b \in S_{\epsilon}$, so $f(b) -f(a) \geq (M-\epsilon)(b-a)$.  Since this 
holds for all $\epsilon > 0$, we deduce $f(b) - f(a) \geq  M(b-a)$.  (RI1) is immediate.  (RI2) Since $f'(x) \geq M$, there is $\delta > 0$ such that $y \in [x,x+\delta] \implies \frac{f(y) - f(x)}{y-x} \geq M - \epsilon$.  Thus $f(y) - f(a) = (f(y) - f(x)) + (f(x) - f(a)) \geq  (M-\epsilon)(y-x) + (M-\epsilon)(x-a) = (y-a)(M-\epsilon),$
so $[x,x+\delta] \subset S_{\epsilon}$.  (RI3) Suppose that for $x \in (a,b]$ we have $[a,x) \subset S_{\epsilon}$.  As above, since $f'(x) \geq M$, there exists $\delta > 0$ such that $y \in [x-\delta,x] \implies \frac{f(x)-f(y)}{x-y} \geq M - \epsilon$.  Thus \[f(x) - f(a) = (f(x) - f(y)) + (f(y) - f(a)) \] \[ \geq (M-\epsilon)(x-y) + (M-\epsilon)(y-a) = 
(x-a)(M-\epsilon), \]
so $x \in S_{\epsilon}$.  
\end{proof}

\begin{prob}
Give either a direct proof of the Mean Value Theorem by Real Induction or a convincing explanation for why such a proof cannot exist.
\end{prob}

\subsection{Some Real Induction Proofs For the Reader}
\textbf{} \\ \\ \noindent
Here are more results amenable to Real Induction.  The proofs are left to you.

\begin{thm}(Cantor Intersection Theorem)
\label{CANTORINTTHM}
Let $\{F_n\}_{n=1}^{\infty}$ be a decreasing sequence of closed 
subsets of $[a,b]$.  Put $F = \bigcap_n F_n$.  Then \emph{either} 
$F \neq \varnothing$ or there exists $n \in \Z^+$ such that $F_n = \varnothing$.
\end{thm}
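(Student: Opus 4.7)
The plan is to prove the contrapositive: assuming $F = \varnothing$, I will produce an $n$ with $F_n = \varnothing$. The set that suggests itself for Real Induction is
\[ S = \{x \in [a,b] \ | \ \exists n \in \Z^+, \ F_n \cap [a,x] = \varnothing\}. \]
If I can show $S = [a,b]$, then taking $x = b$ gives $F_n = F_n \cap [a,b] = \varnothing$ for some $n$, which is what I want.

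For (RI1): since $F = \bigcap_n F_n = \varnothing$, the point $a$ fails to lie in some $F_n$, and thus $F_n \cap \{a\} = \varnothing$, so $a \in S$. For (RI2): suppose $x \in S$ with $x < b$ and pick $n$ with $F_n \cap [a,x] = \varnothing$. In particular $x \notin F_n$; since $F_n$ is closed in $[a,b]$, there is $\delta > 0$ (with $x+\delta \leq b$) such that $[x,x+\delta] \cap F_n = \varnothing$. Combining with $F_n \cap [a,x] = \varnothing$ gives $F_n \cap [a,x+\delta] = \varnothing$, so for every $y \in [x,x+\delta]$ the same $n$ witnesses $y \in S$.

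The step I expect to be the main obstacle is (RI3), since here I have to produce a single $n$ that works for $[a,x]$ from a family of indices working on $[a,y]$ for $y < x$; this is where I will need to use both the hypothesis $F = \varnothing$ and the monotonicity $F_{n+1} \subset F_n$. So suppose $[a,x) \subset S$ with $x \in (a,b]$. Since $x \notin F = \bigcap_n F_n$, there is some $n$ with $x \notin F_n$; closedness of $F_n$ then yields $\delta > 0$ (with $x - \delta \geq a$) such that $[x-\delta,x] \cap F_n = \varnothing$. Since $x-\delta \in [a,x) \subset S$, there is $m$ with $F_m \cap [a,x-\delta] = \varnothing$. Setting $k = \max(n,m)$, the inclusions $F_k \subset F_n$ and $F_k \subset F_m$ force
\[ F_k \cap [a,x] = \bigl(F_k \cap [a,x-\delta]\bigr) \cup \bigl(F_k \cap [x-\delta,x]\bigr) = \varnothing, \]
so $x \in S$. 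By Real Induction (Theorem \ref{REALINDTHM}), $S = [a,b]$, completing the proof.
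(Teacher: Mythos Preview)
Your argument is correct and is precisely the Real Induction proof the paper invites the reader to supply (the paper states the theorem in \S5.2 but leaves the proof as an exercise). The choice of $S$, the use of closedness of $F_n$ for (RI2), and the combination of closedness, the hypothesis $F=\varnothing$, and monotonicity via $k=\max(m,n)$ for (RI3) are exactly what is needed.
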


\begin{thm}
\label{LEBESGUENUMBERTHM}
Any open covering of $[a,b]$ admits a Lebesgue number.
\end{thm}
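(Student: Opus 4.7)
The plan is to apply Real Induction, localizing the Lebesgue property to subintervals $[a,x]$. Fix an open covering $\mathcal{U} = \{U_i\}_{i \in I}$ of $[a,b]$ and set
\[ S = \{x \in [a,b] \mid \exists\, \delta > 0 \text{ such that every } A \subset [a,x] \text{ of diameter } < \delta \text{ lies in some } U_i\}. \]
The goal is $S = [a,b]$, for then $b \in S$ furnishes a Lebesgue number for $\mathcal{U}$. Axiom (RI1) is immediate: $[a,a] = \{a\}$ and $\{a\}$ lies in any $U_i$ containing $a$.

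For (RI2), suppose $x \in S \cap [a,b)$ with witness $\delta_1 > 0$. Pick $U_i \in \mathcal{U}$ with $x \in U_i$ and choose $\eta > 0$ small enough that $(x - 2\eta, x + 2\eta) \cap [a,b] \subset U_i$. Set $\delta := \min(\delta_1, \eta)$. Given any $A \subset [a,x+\eta]$ of diameter less than $\delta$: if $A \subset [a,x]$, the choice of $\delta_1$ places $A$ in some $U_j$; otherwise $A$ meets $(x,x+\eta]$, so every point of $A$ lies within $\eta$ of a point of $(x,x+\eta]$, whence $A \subset (x - \eta, x + 2\eta) \cap [a,b] \subset U_i$. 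Hence $[x,x+\eta] \subset S$.

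Step (RI3) is parallel. Suppose $a < x$ and $[a,x) \subset S$. Choose $U_i \ni x$ and $\eta \in (0, x-a)$ with $(x-2\eta, x+2\eta) \cap [a,b] \subset U_i$. Applying the induction hypothesis to $x - \eta \in [a,x)$ yields $\delta_1 > 0$ working on $[a, x-\eta]$; then $\delta := \min(\delta_1, \eta)$ works on $[a,x]$, because a set $A \subset [a,x]$ of diameter $< \delta$ either fits in $[a,x-\eta]$ or meets $(x-\eta,x]$, and in the latter case $A \subset (x-2\eta, x+\eta) \cap [a,b] \subset U_i$.

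The main technical point is building in a ``factor of $2$'' when choosing the buffer $\eta$, so that any small set straddling the pivot $x$ is automatically swallowed by a single element of the cover. Once that slack is in place, splicing the inductively-produced Lebesgue number on the left piece with the local one supplied by an open $U_i \ni x$ is just a matter of taking a minimum. The rest is bookkeeping; the minor edge case $x - \eta \leq a$ in (RI3) is handled by shrinking $\eta$, which is why it is convenient to demand $\eta < x - a$ from the start.
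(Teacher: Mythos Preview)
Your proof is correct and is precisely the Real Induction argument the paper has in mind: this theorem appears in the section ``Some Real Induction Proofs For the Reader,'' where the paper deliberately leaves the details as an exercise. Your choice of inductive set, the factor-of-$2$ buffer around the pivot $x$, and the splicing via $\min(\delta_1,\eta)$ are exactly what the method calls for; there is nothing to add.
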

%\begin{proof}
%Let $\mathcal{U} = \{U_i\}_{i \in I}$ be an open covering of $[a,b]$.  For each %$x \in [a,b]$, let $\mathcal{U}_x$ 
%be the covering $\{U_i \cap [a,x]\}_{i \in I}$ of $[a,x]$.  Let $S$ be the set %of $x$ in $[a,b]$ such that $\mathcal{U}_x$ has a Lebesgue number.  We will %prove by Real Induction that $S = [a,b]$, so that  $\mathcal{U}_b = %\mathcal{U}$ has a Lebesgue number. \\ (RI1) It holds vacuously that any %$\alpha > 0$ is a Lebesgue number for $\mathcal{U}_a$.   So $a \in S$. \\ (RI2) %Suppose that $x \in S$ and $x < b$, and let $\alpha_x > 0$ be a Lebesgue number %for $\mathcal{U}_x$.  In particular, 
%let $U_{\bullet}$ be an element of the covering containing $x$.   Choose 
%$\epsilon > 0$ such that $[x-\epsilon,x+\epsilon] \subset U_{\bullet}$.  We %claim that for all $y \in [x,x+\epsilon]$,  $\alpha = \min(\alpha_x,\epsilon)$ %is a Lebesgue number for the covering $U_{y}$.  Indeed, let $A \subset [a,y]$ %be a set of diameter 
%at most $\alpha$.  If $A \subset [a,x)$, then since $\operatorname{diam}(A) %\leq \alpha \leq \alpha_x$, $A$ lies entirely 
%in some element of the covering $\mathcal{U}_x$.  Otherwise $\max A \geq x$, %and since, $\operatorname{diam}(A) \leq \epsilon$, $\min A \geq x - \epsilon$, %so $A \subset U_{\bullet}$.    Thus $[a,x+\epsilon] \subset S$. \\ (RI3) This %is similar to that of part (ii) and is left to 
%the reader. \\
%\end{proof}
%\subsection{Dini's Lemma}

\begin{thm}(Dini's Lemma)
Let $\{f_n\}_{n=1}^{\infty}$ be a sequence of continuous real-valued functions on the interval $[a,b]$ which is \emph{pointwise decreasing}: for all $x \in [a,b]$ and all $n \in \Z^+$, $f_{n+1}(x) \leq f_n(x)$.  If $f: [a,b] \rightarrow \R$ is continuous and $f_n \ra f$ pointwise, then $f_n \ra f$ uniformly.
\end{thm}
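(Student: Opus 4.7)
The plan is to set $g_n = f_n - f$, so that $\{g_n\}$ is a pointwise decreasing sequence of continuous nonnegative functions on $[a,b]$ with $g_n \to 0$ pointwise, and then prove that $g_n \to 0$ uniformly by Real Induction on $[a,b]$. Fix $\epsilon > 0$ and define
\[ S(\epsilon) = \{x \in [a,b] \ | \ \exists N \in \Z^+ \text{ such that } g_n(y) < \epsilon \text{ for all } n \geq N \text{ and all } y \in [a,x]\}. \]
The goal is to show $S(\epsilon) = [a,b]$; evaluating at $x = b$ yields the uniform $N$ that witnesses uniform convergence.

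I would verify the three axioms as follows. For (RI1), pointwise convergence at $a$ gives some $N$ with $g_n(a) < \epsilon$ for $n \geq N$, so $a \in S(\epsilon)$. For (RI2), given $x \in S(\epsilon)$ with witness $N$, continuity of $g_N$ at $x$ together with the fact that $g_N(x) < \epsilon$ produces $\delta > 0$ with $g_N(y) < \epsilon$ on $[x,x+\delta]$; the monotonicity $g_n \leq g_N$ for $n \geq N$ then propagates this bound to every $n \geq N$, and combining with the hypothesis on $[a,x]$ shows the same $N$ witnesses $x+\delta \in S(\epsilon)$, hence $[x,x+\delta] \subset S(\epsilon)$.

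The step requiring the most care is (RI3). Suppose $[a,x) \subset S(\epsilon)$. Pointwise convergence at $x$ gives $M$ with $g_n(x) < \epsilon$ for $n \geq M$; by continuity of $g_M$ at $x$, choose $\delta > 0$ with $g_M(y) < \epsilon$ on $[x-\delta, x]$, and use monotonicity again to extend this to all $n \geq M$. Now $x - \delta/2 \in S(\epsilon)$, so there is $N'$ with $g_n(y) < \epsilon$ for $n \geq N'$ and $y \in [a, x-\delta/2]$. Setting $N = \max(M, N')$ handles both pieces of $[a,x] = [a, x-\delta/2] \cup [x-\delta, x]$ simultaneously, so $x \in S(\epsilon)$.

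The only real obstacle is remembering that monotonicity is what allows a bound obtained from a single continuity argument (applied to one specific function $g_N$ or $g_M$) to persist for all larger indices; without this, one would be stuck trying to choose a single $\delta$ that works for infinitely many functions at once. Everything else is a routine packaging of continuity and pointwise convergence into the inductive predicate, and Real Induction then immediately delivers the uniform $N$.
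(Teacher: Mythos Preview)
Your proof is correct and follows exactly the approach the paper intends: the paper lists Dini's Lemma among ``more results amenable to Real Induction'' and explicitly leaves the proof to the reader, so your inductive set $S(\epsilon)$ and verification of (RI1)--(RI3) are precisely what is being asked for. The key observation you highlight---that monotonicity lets a continuity bound on a single $g_N$ propagate to all $g_n$ with $n\geq N$---is indeed the heart of the matter, and your handling of (RI3) via the overlap $[a,x-\delta/2]\cup[x-\delta,x]$ is the standard manoeuvre (cf.\ the Covering Lemma pattern used in the paper's proof of the Uniform Continuity Theorem).
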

%\noindent
%A proof by Real Induction begins by letting $S$ be the set of $x \in [a,b]$ such that the restriction of $f_n$ to $[a,x]$ converges uniformly to $f$ on $[a,x]$.  The details are similar to that of the proof of Theorem \ref{INTTHM3} and are 
%left to the interested reader.
%\subsection{The Arzel\`a-Ascoli Theorem}

\begin{thm}(Arzel\`a-Ascoli) \textbf{} \\
Let $\{f_n\}_{n=1}^{\infty}$ be a sequence of continuous functions on $[a,b]$ such that: \\
(i) There is $M \in \R$ such that for all $n \in \Z^+$ and all $x \in [a,b]$, $|f_n(x)| \leq M$, and \\
(ii) For all $x \in [a,b]$ and all $\epsilon > 0$, there exists 
$\delta > 0$ such that if $|x-y| < \delta$, then for all $n \in \Z^+$, 
$|f_n(x) - f_n(y)| < \epsilon$. \\
Then there is a subsequence $\{f_{n_k}\}$ which is uniformly 
convergent on $[a,b]$.
\end{thm}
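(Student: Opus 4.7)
The plan is to prove $S = [a,b]$ by Real Induction, where
\[ S = \{x \in [a,b] : \text{every subsequence of } \{f_n\} \text{ has a sub-subsequence converging uniformly on } [a,x]\}; \]
applying the defining property at $x = b$ to the original sequence $\{f_n\}$ then yields the conclusion. The stronger ``every subsequence has a sub-subsequence'' phrasing is forced on us by step (RI3), which begins with an arbitrary subsequence and must extract inside it. Throughout I use that pointwise equicontinuity (hypothesis (ii)) on the compact interval $[a,b]$ upgrades to uniform equicontinuity: for each $\epsilon > 0$ there is $\delta_\epsilon > 0$ such that $z_1, z_2 \in [a,b]$ with $|z_1 - z_2| < \delta_\epsilon$ forces $|f_n(z_1) - f_n(z_2)| < \epsilon$ for all $n$ simultaneously. (This is itself a routine consequence of pointwise equicontinuity plus Heine-Borel.)

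Step (RI1) is immediate from hypothesis (i) and Bolzano-Weierstrass applied to the bounded scalar sequence of values at $a$. For (RI2), fix $x \in S$ with $x < b$, choose any $\delta_0 \in (0, b-x]$, and fix a countable dense $D \subset [x, x+\delta_0]$. Given a subsequence $\{g_k\}$, first use $x \in S$ to pass to a sub-subsequence $\{g^{(1)}_k\}$ converging uniformly on $[a,x]$, and then apply the Cantor diagonal construction (using (i) at each point of $D$ in turn) to extract $\{h_k\} \subset \{g^{(1)}_k\}$ that converges pointwise at every element of $D$. Given $\epsilon > 0$, cover $[x, x+\delta_0]$ by finitely many intervals of radius $\delta_\epsilon$, each containing a point of $D$; a three-$\epsilon$ estimate using pointwise Cauchyness at those distinguished points and uniform equicontinuity shows $\{h_k\}$ is uniformly Cauchy on $[x, x+\delta_0]$. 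Combined with uniform convergence on $[a,x]$, this proves $[x, x+\delta_0] \subset S$.

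For (RI3), suppose $[a, x) \subset S$ and pick $y_n \uparrow x$ with $y_n \in [a, x)$. Given $\{g_k\}$, inductively shrink it: extract $\{g^{(n+1)}_k\} \subset \{g^{(n)}_k\}$ converging uniformly on $[a, y_{n+1}]$ using $y_{n+1} \in S$. The diagonal $\{g^{(k)}_k\}$ is eventually a subsequence of every $\{g^{(n)}_k\}$, hence converges uniformly on each $[a, y_n]$; one final Bolzano-Weierstrass extraction at the single point $x$ produces $\{h_k\}$ with all these properties plus pointwise convergence at $x$. Given $\epsilon > 0$, choose $n$ with $x - y_n < \delta_\epsilon$; then for $z \in [y_n, x]$ and $j, k$ large,
\[ |h_j(z) - h_k(z)| \leq |h_j(z) - h_j(x)| + |h_j(x) - h_k(x)| + |h_k(x) - h_k(z)| < 3\epsilon, \]
so $\{h_k\}$ is uniformly Cauchy on $[y_n, x]$. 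Combined with uniform convergence on $[a, y_n]$, this yields uniform convergence on $[a, x]$, so $x \in S$.

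The principal obstacle is that the equicontinuity modulus $\delta_\epsilon$ shrinks with $\epsilon$, so one cannot naively ``extend uniform convergence from $[a,x]$ to $[a,x+\delta]$'' for any single fixed $\delta$: the subsequence inherited from $[a,x]$ need not itself be Cauchy past $x$. The diagonal-on-a-dense-set device sidesteps this by decoupling the two jobs involved: the construction of candidate limit values, carried out at countably many points via Bolzano-Weierstrass and diagonalization, and the continuous propagation of convergence, carried out via uniform equicontinuity and a finite cover of the compact interval.
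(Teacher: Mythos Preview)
Your argument is correct, and since the paper leaves this theorem as an exercise (``The proofs are left to you''), there is no in-text proof to compare against.

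One structural remark is worth making. You open by invoking Heine--Borel to upgrade the pointwise equicontinuity hypothesis (ii) to \emph{uniform} equicontinuity. Once that is in hand, your (RI2) step works for \emph{any} $\delta_0 \in (0,\,b-x]$, as you yourself note. In particular, taking $x=a$ from (RI1) and $\delta_0 = b-a$ already yields $S=[a,b]$ in one stroke; your carefully written (RI3) step, while correct, is never actually invoked. What survives is essentially the classical Arzel\`a--Ascoli argument---diagonalize over a countable dense set, then run a three-$\epsilon$ estimate against a finite $\delta_\epsilon$-net---wearing Real Induction as a costume. (Incidentally, in (RI3) you can anchor the three-$\epsilon$ estimate at $y_n$ rather than at $x$, and then the final Bolzano--Weierstrass extraction at $x$ becomes unnecessary.)

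This is not a gap: the logic is sound, and since Heine--Borel and Bolzano--Weierstrass are themselves proved by Real Induction in the paper, your proof is ``inductive all the way down.'' But if you want the \emph{outer} induction to do genuine work, the cleaner route is to first prove by Real Induction that the family $\{f_n\}$ is uniformly equicontinuous on $[a,b]$---this goes through almost verbatim as in the paper's proof of the Uniform Continuity Theorem, using the Covering Lemma---and then run the standard diagonal-plus-net argument once on all of $[a,b]$. Alternatively, one may try to dispense with uniform equicontinuity and use only the pointwise hypothesis in each inductive step, but making (RI2) produce a \emph{fixed} extension length $\delta$ from pointwise data at $x$ alone is genuinely delicate; your instinct to strengthen $S$ to the ``every subsequence has a sub-subsequence'' form is exactly what such an approach would require.
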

%\noindent
%This time we leave the real induction proof entirely to the reader, with the %hint that as in the proof of (e.g.) Theorem \ref{UCT}, one should introduce not %just one inductive subset $S$ but for all $\epsilon > 0$ an indutive subset %$S_{\epsilon}$.  
\noindent
 Now that you are aware of 
the method, I invite you explore its further uses.  
\begin{prob}
Find other theorems which can be proved via Real Induction.
\end{prob}

\end{document}